\documentclass[12pt]{article}

\usepackage{amsfonts,amsmath,amsthm,bm,bbm, thmtools, thm-restate}
\usepackage{latexsym,color,epsfig,mathrsfs,enumerate}
\usepackage[toc,page]{appendix}
\usepackage{fancyhdr}
\usepackage[colorlinks=true, allcolors=red]{hyperref}
\usepackage{indentfirst}
\usepackage[maxbibnames=9]{biblatex}
\usepackage{comment}
\usepackage{xcolor}
\usepackage[shortlabels]{enumitem}

\usepackage{graphicx}
\usepackage{pgfplots}
\pgfplotsset{compat=1.15}
\usetikzlibrary{arrows}

\nocite{*}

\newtheorem{thm}{Theorem}[section]
\newtheorem{dfn}[thm]{Definition}
\newtheorem{lem}[thm]{Lemma}

\newtheorem{prop}[thm]{Proposition}

\newtheorem{conjecture}[thm]{Conjecture}
\newtheorem{exam}{Example}

\usepackage{combelow}

\usepackage{newunicodechar}
\newunicodechar{ș}{\cb{s}}
\newunicodechar{ț}{\cb{t}}

\newcommand{\eps}{\varepsilon}

\newcommand{\R}{\mathbb R}

\title{Many Antipodal Pairs Force Many Neighboring Pairs}
\date{}

\author{G\'abor Dam\'asdi\thanks{HUN-REN Alfr\'ed R\'enyi Institute of Mathematics, ELTE Eötvös Loránd University, Budapest. Email: \texttt{damasdigabor@caesar.elte.hu}} \and Lauren\cb{t}iu Ploscaru\thanks{HUN-REN Alfr\'ed R\'enyi Institute of Mathematics, Budapest. Email: \texttt{laurentiuploscaru@renyi.hu}.
}}

\begin{document}

\maketitle

\begin{abstract}
	  Let $X=\{x_1,\dots,x_n\}\subset \R^2$ be a finite set of points of diameter at most $1$. It is natural to expect that if many pairs $(x_i,x_j)$ lie at distance close to $1$ from each other, then some clustering phenomenon must occur, implying that a significant number of these pairs are also very close to each other. 
      We prove that there exists a universal constant $c>0$ such that for all $0<\eps<1$, whenever $n$ is large enough, we have: 
\[
\big|\{(i,j):\|x_i-x_j\|\le \eps\}\big|
\ge c\cdot \eps^{1/2}\cdot
\big|\{(i,j):\|x_i-x_j\|\ge 1-\eps\}\big|.
\]

This confirms a recent conjecture of Steinerberger, who asked whether the $\eps^{1/2}$ ratio is the best possible. We also study a two-parameter version of Steinerberger's question by considering the number of pairs at distance at most $\eps_1$ and at distance at least $1-\eps_2$. We show that in this case the optimal ratio is $\eps_1^2\cdot\eps_2^{-3/2}$. The proof proceeds by introducing an auxiliary graph associated with the set $X$ and reducing the problem to bounding the largest eigenvalue of its adjacency matrix. Our main result is the outcome of human--AI interactions using ChatGPT 5.4. 
      
\end{abstract}

\section{Introduction}

 Consider a finite set of points  $X=\{x_1,\dots,x_n\}\subset \R^2$ of diameter at most 1. What can we say about the number of pairs of points that are within distance $\varepsilon$ from each other? The area of the convex hull is bounded, hence we always have  $\Omega(n^2\varepsilon^{2})$ such pairs, and this cannot be improved in general. In this paper we investigate how this behavior changes if we assume that there are many pairs of points in $X$ at distance at least $1-\varepsilon$. 

 For $0<\eps<1$, we call a pair $(x_i,x_j)$ $\varepsilon$-\emph{antipodal} if $\|x_i-x_j\|\ge 1-\eps$ and $\varepsilon$-\emph{neighboring} if $\|x_i-x_j\|\le \eps$.  Let $\mathcal{A}_\eps(X)$ and $\mathcal{N}_\eps(X)$ denote the set of antipodal pairs and the set of neighboring pairs, respectively. More precisely, we define:
\begin{align*}
     \mathcal{A}_\eps(X):=&\{\{x,y\}\subset X: \|x-y\|\geq 1-\eps\}, \\
     \vspace{4pt}\mathcal{N}_\eps(X):=&\{\{x,y\}\subset X: \|x-y\|\leq \eps\}. 
\end{align*}   
  It is natural to expect that if we have many antipodal pairs, then some of the points are forced to be close to the boundary and the number of neighboring pairs increases. Furthermore, as we increase the number of antipodal pairs, we can also expect some clustering of the points. Hence, we are interested in how small the ratio $R_\eps(X):=|\mathcal{N}_\eps(X)|/|\mathcal{A}_\eps(X)|$  can be. This line of research was recently initiated by Steinerberger \cite{steinerberger2025antipodesimpliesneighbors}. He showed the following: 

\begin{thm}[Steinerberger \cite{steinerberger2025antipodesimpliesneighbors}]\label{thm:steinerberger}
There are constants $c_1,c_2>0$ such that for all $0<\varepsilon <1$ and any finite $X\subset \mathbb{R}^2$ of diameter at most $1$ with $|X|\ge c_1\varepsilon^{-2}$, we have $R_\eps(X)\ge c_2\sqrt[4]{\eps^3/\log(\eps^{-1})}$, i.e.
    \[
\big|\{(i,j):\|x_i-x_j\|\le \eps\}\big|
\ge c_2\cdot \frac{\eps^{3/4}}{(\log\eps^{-1})^{1/4}}\cdot
\big|\{(i,j):\|x_i-x_j\|\ge 1-\eps\}\big|.
\]

\end{thm}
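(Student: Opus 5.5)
Our plan for Theorem~\ref{thm:steinerberger} is to obtain the bound with an exponent at least as good as $3/4$ (up to the logarithm) by the same mechanism we later use for the main result. The mechanism is a spectral argument on the graph of antipodal pairs, combined with a counting argument for clusters.

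\textbf{Step 1: localization.} First cover $\R^2$ by a grid of squares of side $\eps/\sqrt2$, so that any two points in the same cell form a neighboring pair. Let the cells be $Q_1,\dots,Q_m$ with $a_k=|X\cap Q_k|$. Since $\sum_k a_k=n$, convexity gives
\[
|\mathcal N_\eps(X)|\ \ge\ \sum_k \binom{a_k}{2}\ \gtrsim\ \sum_k a_k^2 ,
\]
where the last step uses $|X|\ge c_1\eps^{-2}$ and the fact that only $O(\eps^{-2})$ cells meet the unit-diameter set. This absorbs the diagonal terms.

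\textbf{Step 2: the auxiliary graph.} Define a weighted graph $G$ on the cells by joining $Q_k$ and $Q_l$ if some pair in $Q_k\times Q_l$ is at distance at least $1-\eps$. Then
\[
|\mathcal A_\eps(X)|\ \le\ \sum_{k\sim l} a_k a_l\ =\ \tfrac12\, a^{T} A a\ \le\ \tfrac12\,\lambda_{\max}(A)\,\|a\|_2^2 ,
\]
where $A$ is the adjacency matrix of $G$. Combined with Step 1, this gives $R_\eps(X)\gtrsim 1/\lambda_{\max}(A)$, so it suffices to prove $\lambda_{\max}(A)\lesssim \eps^{-3/4}(\log \eps^{-1})^{1/4}$, or better.

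\textbf{Step 3: bounding $\lambda_{\max}$.} This is the main obstacle. We use $\lambda_{\max}\le \sqrt{\max_k\sum_{l\sim k}\deg(l)}$, or more generally $\lambda_{\max}^{2r}\le$ the maximal number of closed walks of length $2r$. The geometric input is that the diameter bound forces the occupied cells to lie in the $(1+O(\eps))$-width lens structure:
\begin{itemize}
\item If $Q_k\sim Q_l$, every occupied cell lies within distance $1$ of both $Q_k$ and $Q_l$, hence in an intersection of two unit disks centered about $1-\eps$ apart.
\item The neighbors of a fixed cell $Q_k$ lie in an annulus of width $O(\eps)$ and radius about $1$ around $Q_k$, clipped to the convex hull.
\item The diameter condition forces the antipodal directions to span an arc of angle $O(\sqrt\eps)$, since points at distance $\ge 1-\eps$ from a fixed point, inside a diameter-$1$ set, lie within angle $O(\sqrt{\eps})$ of each other.
\end{itemize}
Consequently each cell has degree $O(\sqrt\eps/\eps)=O(\eps^{-1/2})$. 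A two-step walk $k\to l\to k'$ keeps $k'$ within an $O(\eps)\times O(\sqrt\eps)$ region near $k$, with a similar count. Estimating walks of length four this way, and summing over dyadic scales of the angular spread (which produces the $\log\eps^{-1}$ factor), gives $\lambda_{\max}^4\lesssim \eps^{-3}\log\eps^{-1}$. This is exactly the claimed bound.

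I expect the delicate point to be this walk count. It requires controlling how the $O(\sqrt\eps)$ arcs of consecutive vertices overlap, and I would try the dyadic decomposition by angle first. Steps 1 and 2 are routine.
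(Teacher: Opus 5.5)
Your Steps 1 and 2 are sound. They match the paper's reduction to the largest eigenvalue of a box graph (Proposition~\ref{prop:eigen}), and keeping all cells rather than only the boundary layer even spares you that proposition's case analysis.

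\textbf{The gap is Step 3: its geometric input is false.} Points at distance $\ge 1-\eps$ from a fixed point of a diameter-$1$ set need not lie within angle $O(\sqrt{\eps})$ of each other. Two such points only have to be at mutual distance $\le 1$, which permits an angle of about $\pi/3$.

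Take $X$ spread along the boundary of a Reuleaux triangle. The corner $P_0$ is at distance exactly $1$ from every point of the opposite arc. So the cell at $P_0$ has degree $\Theta(\eps^{-1})$, linear in the number of boundary cells, as the paper remarks in Section~\ref{sec:conclude}. Likewise, a two-step walk from a cell on that arc to $P_0$ and back reaches every cell of an arc of length $\pi/3$, not an $O(\eps)\times O(\sqrt{\eps})$ region.

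The degree claim is also inconsistent with the rest of your plan. If every degree were $O(\eps^{-1/2})$, then $\lambda_{\max}\le\max\deg$ would give the sharp bound $R_\eps(X)=\Omega(\eps^{1/2})$ immediately, and no walk count or logarithm would be needed. Your estimate $\lambda_{\max}^4=O(\eps^{-3}\log\eps^{-1})$ is asserted rather than derived, and it rests on these false claims.

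\textbf{What is missing} is a constraint that allows high-degree vertices but prevents many of them from sitting around a single vertex. In the paper this is Lemma~\ref{quadrilateral}: if $ABCD$ is a convex quadrilateral of diameter at most $1$ with $|AD|,|BC|\ge 1-\eps$, then $|AB|\cdot|CD|<3\eps$. The $t$ neighbors of a vertex of degree $\ge t$ lie in an annulus of width $O(\eps)$, so they spread over an arc of length $\Omega(\eps t)$. Combining the two gives the tail bound of Lemma~\ref{lem:tails}: every vertex has $O(\eps^{-1}/t)$ neighbors of degree $\ge t$.

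With this, your own first inequality already suffices. Writing $d_l=\deg(l)$ and using that all degrees are $O(\eps^{-1})$,
\[
\lambda_{\max}^2\le\max_k\sum_{l\sim k}d_l=\max_k\sum_{t\ge1}\bigl|\{l\sim k:\ d_l\ge t\}\bigr|\le\sum_{t=1}^{O(\eps^{-1})}O\Bigl(\frac{1}{\eps t}\Bigr)=O\bigl(\eps^{-1}\log\eps^{-1}\bigr).
\]
This yields $R_\eps(X)=\Omega\bigl(\sqrt{\eps/\log\eps^{-1}}\bigr)$, which is stronger than the statement. The paper's test vector $(\sqrt{d_1},\dots,\sqrt{d_N})$ in Proposition~\ref{prop:sqrt-control} removes the logarithm.

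\textbf{A note on the statement itself.} The paper cites this theorem and does not prove it directly. For $\eps\le 1/2$ it follows from Theorem~\ref{thm:main_single}, since $\eps^{1/2}\ge\eps^{3/4}(\log\eps^{-1})^{-1/4}$ in that range. For $\eps$ near $1$ the bound as literally stated fails. Two tiny clusters at distance $1$ give a ratio below $1$, while the right-hand side tends to infinity.
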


Steinerberger provided a number of examples where the ratio is of order $\eps^{1/2}$, one of which is a regular polygon on sufficiently many vertices, and asked whether this threshold is optimal. Very recently, Korsky \cite{korsky2026optimalspectralboundsantipodal} showed that this is achievable up to a polylogarithmic factor. 

\begin{thm}[Korsky \cite{korsky2026optimalspectralboundsantipodal}]\label{thm:korsky}
There are universal constants $c_1,c_2>0$ such that for any $0<\varepsilon<1$  and  any finite $X\subset \mathbb{R}^2$ of diameter at most $1$ with $|X|\ge c_1\varepsilon^{-2}$, we have ${R_\eps(X)\ge c_2\sqrt{\eps/\log(\eps^{-1})}}$, i.e.
    \[
\big|\{(i,j):\|x_i-x_j\|\le \eps\}\big|
\ge c_2\cdot \frac{\eps^{1/2}}{(\log\eps^{-1})^{1/2}}\cdot
\big|\{(i,j):\|x_i-x_j\|\ge 1-\eps\}\big|.
\]

\end{thm}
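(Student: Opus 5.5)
The plan is to discretize at scale $\eps$ and turn the ratio bound into an eigenvalue bound for an auxiliary graph on grid cells.

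\textbf{Reduction to an eigenvalue bound.} Tile the plane by axis-parallel squares of side $\eps/2$, so that any two points in the same square are $\eps$-neighboring. Let $m_Q$ be the number of points of $X$ in the square $Q$. Then
\[
|\mathcal N_\eps(X)|\ \ge\ \sum_Q \binom{m_Q}{2}\ \ge\ \tfrac12\sum_Q m_Q^2-\tfrac n2 .
\]
Since $X$ has diameter at most $1$, only $O(\eps^{-2})$ squares are occupied. By Cauchy--Schwarz, $\sum_Q m_Q^2\gtrsim n^2\eps^2$. Hence, if $|X|\ge c_1\eps^{-2}$ with $c_1$ large, we get $|\mathcal N_\eps(X)|\gtrsim \sum_Q m_Q^2=\|m\|_2^2$.

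For the antipodal pairs, define a graph $G$ on occupied squares by joining $Q,Q'$ if some points $x\in Q$, $y\in Q'$ satisfy $\|x-y\|\ge 1-\eps$. Then
\[
|\mathcal A_\eps(X)|\le \sum_{Q\sim Q'} m_Qm_{Q'}=\tfrac12\, m^{T}Am\le \tfrac12\,\lambda_1(A)\,\|m\|_2^2 .
\]
It therefore suffices to prove $\lambda_1(A)\lesssim \sqrt{\log(\eps^{-1})/\eps}$.

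\textbf{Geometry of the graph.} Let $K$ be the convex hull of $X$, which has diameter at most $1$. For a square $Q$ with center $q$, every neighbor of $Q$ lies in the annulus $\{1-2\eps\le \|z-q\|\le 1+\eps\}$ intersected with the $\eps$-neighborhood of $K$. Let $\ell_Q$ be the length of the arc of the circle of radius $1$ about $q$ lying in this neighborhood. Then
\[
\deg Q\lesssim 1+\ell_Q/\eps .
\]
The key geometric lemma I would aim for is a trade-off along edges: if $Q\sim Q'$, then $\ell_Q\,\ell_{Q'}\lesssim \eps$, up to constants. The reason is that all of $K$ lies in $B(x,1)\cap B(y,1)$ with $\|x-y\|\approx 1$. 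The long arc about $q$ must then be nearly "rigid" (as in a Reuleaux vertex), which forces $K$ to be thin in the direction seen from $Q'$.

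The test cases are instructive. For the regular polygon, $\ell\sim\sqrt\eps$ everywhere. For the Reuleaux triangle, $\ell\sim 1$ at a vertex, while $\ell\sim\eps$ on the opposite arc. In both cases the trade-off holds.

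\textbf{Bounding $\lambda_1$ with a dyadic split.} Sort the squares into $O(\log \eps^{-1})$ dyadic classes $C_k=\{Q:\ell_Q\in(2^{-k-1},2^{-k}]\}$, with $2^{-k}$ ranging from $\eps$ to $1$. Write $A=\sum_{j,k}A_{jk}$ for the block decomposition. By the trade-off, $A_{jk}$ vanishes unless $2^{-j-k}\lesssim \eps$. For nonzero blocks, the row sums are at most $2^{-j}/\eps$ and the column sums at most $2^{-k}/\eps$. Schur's test then gives
\[
\|A_{jk}\|\lesssim \sqrt{2^{-j-k}}/\eps\lesssim \eps^{-1/2}.
\]
Combining blocks is the place where I expect the logarithm to enter. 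Write $m=\sum_k m_k$ by classes and use $m^TAm=\sum_{j,k} m_j^T A_{jk} m_k$. For fixed $j$, only the classes $k$ with $2^{-k}\lesssim \eps 2^{j}$ contribute, and their total effect is a geometric sum. Organizing the double sum diagonally, by $s=j+k$, and using Cauchy--Schwarz across the $O(\log\eps^{-1})$ classes should give
\[
m^TAm\lesssim \eps^{-1/2}\sqrt{\log \eps^{-1}}\;\|m\|_2^2 ,
\]
which is exactly the required bound.

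\textbf{Main obstacle.} The hardest step is the geometric trade-off lemma $\ell_Q\ell_{Q'}\lesssim\eps$. Its proof requires a careful analysis of the lens $B(x,1)\cap B(y,1)$ near its two corners, together with the $\eps$-fattening of $K$. If only a weaker statement can be proved, such as a trade-off that loses a logarithm or holds only for "most" edges, I would absorb the loss into the dyadic bookkeeping above. That loss is still compatible with the $\sqrt{\eps/\log(\eps^{-1})}$ bound claimed in the statement.
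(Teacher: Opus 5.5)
Your reduction to bounding $\lambda_1(A)$ is sound. Summing over all occupied squares even handles the condition $|X|\ge c_1\eps^{-2}$ more directly than the paper does. However, the step that carries all the weight, the edge-wise trade-off $\ell_Q\ell_{Q'}=O(\eps)$ for $Q\sim Q'$, is false.

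Let $X$ be dense on the boundary of a Reuleaux triangle with vertices $P_0,P_1,P_2$. The squares containing $P_0$ and $P_1$ are adjacent, since $|P_0P_1|=1$. Yet the unit circle about the first follows the whole arc $P_1P_2$ up to $O(\eps)$, and the unit circle about the second follows the arc $P_2P_0$. So $\ell_Q\ell_{Q'}\ge(\pi/3)^2$, and both squares have degree of order $\eps^{-1}$.

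Nor is this an isolated exception that a logarithm could absorb. A square on the arc $P_1P_2$ at arc-distance $\delta$ from $P_1$ is also adjacent to the square at $P_0$. A point of the arc $P_0P_2$ at distance $\psi$ from $P_0$ lies at distance about $1-\psi\delta$ from that square. Hence its $\ell$ is of order $\min(1,\eps/\delta)$, and the product is of order $\eps/\delta\gg\eps$ throughout $\eps\le\delta\ll1$. Your test case $\ell\sim\eps$ on the opposite arc is only valid in the middle of that arc. Thus the blocks $A_{0k}$ are nonzero for every $k$, and for $j=k=0$ your Schur estimate $\sqrt{2^{-j-k}}/\eps$ is just the trivial bound $\eps^{-1}$.

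The missing idea is that $\eps$-antipodal pairs constrain each other only when they do not cross. Lemma~\ref{quadrilateral} gives $|AB|\cdot|CD|<3\eps$ when $AD$ and $BC$ are \emph{opposite sides} of a convex quadrilateral $ABCD$ of diameter at most $1$. Crossing pairs, such as $(P_0,x)$ and $(P_1,y)$ with $x$ on the arc $P_1P_2$ and $y$ on the arc $P_2P_0$, are unconstrained. So the usable sparsity is a counting statement, not a pointwise one.

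The paper orders the neighbors of a vertex $k$ angularly, takes the median neighbor $m$, and picks far neighbors on opposite sides of the line $P_kP_m$. This proves (Lemma~\ref{lem:tails}) that $k$ has $O(1/(t\eps))$ neighbors of degree at least $t$, which is sharp by Proposition~\ref{sharp-deg-tail}. The test vector $(\sqrt{d_i})$ in the Collatz--Wielandt bound then gives $\lambda_1=O(\eps^{-1/2})$. The statement follows from Theorem~\ref{thm:main} with $\eps_1=\eps_2$, with no logarithm at all.

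Your block framework can be rescued with this input if you classify squares by degree instead of by $\ell$. The block between degree classes $2^i$ and $2^j$ has row sums at most $\min(2^{i+1},c\,2^{-j}\eps^{-1})$ and column sums at most $\min(2^{j+1},c\,2^{-i}\eps^{-1})$. Schur's test then gives norm at most $\min(2^{(i+j)/2+1},c\,2^{-(i+j)/2}\eps^{-1})$. For fixed $s=i+j$ these blocks occupy distinct row classes and distinct column classes, so their sum has norm equal to the largest one. Summing over $s$ gives $O(\eps^{-1/2})$.
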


We note that in both papers the authors stated the result without assuming any lower bound on the number of points in $X$. Technically, their statements are incorrect. In fact, the condition $|X|=\Omega(\varepsilon^{-2})$ is necessary since one can pack $\Omega(\varepsilon^{-2})$ axis-parallel squares of side-length $1.01\cdot\eps$ into the disk of diameter 1 and take $X$ to be the set of their centers. In this case the ratio $R_\eps(X)$ is 0 since $|\mathcal{N}_\eps(X)|=0$, and no $c_2>0$ works. We have  presented Theorem \ref{thm:steinerberger} and Theorem~\ref{thm:korsky} in their corrected form. 

In this paper we confirm Steinerberger's conjecture. 

\begin{thm}\label{thm:main_single}
There are universal constants $c_1,c_2>0$ such that for any $0<\varepsilon<1$  and  any finite $X\subset \mathbb{R}^2$ of diameter at most $1$ with $|X|\ge c_1\varepsilon^{-2}$ we have $R_\eps(X)\ge c_2\eps^{1/2}$, i.e.
    \[
\big|\{(i,j):\|x_i-x_j\|\le \eps\}\big|
\ge c_2\cdot \eps^{1/2}\cdot
\big|\{(i,j):\|x_i-x_j\|\ge 1-\eps\}\big|.
\]

\end{thm}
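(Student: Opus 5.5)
The plan is to reduce the statement to a spectral bound on a graph of grid cells. Tile the plane by axis-parallel squares of side $\eps/2$, so any two points in the same cell are at distance at most $\eps/\sqrt2<\eps$. For an occupied cell $Q$ let $a_Q$ be the number of points of $X$ in $Q$.

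\textbf{Neighboring pairs.} Counting only pairs inside a common cell gives
\[
|\mathcal N_\eps(X)|\ \ge\ \sum_Q \binom{a_Q}{2}\ \ge\ \tfrac14\sum_Q a_Q^2
\]
as soon as $\sum_Q a_Q^2\ge 2n$. This is where the hypothesis $n\ge c_1\eps^{-2}$ is used. Since $X$ has diameter at most $1$, only $O(\eps^{-2})$ cells are occupied, so Cauchy--Schwarz gives
\[
\sum_Q a_Q^2\gtrsim n^2\eps^2\ge c_1 n,
\]
which is at least $2n$ once $c_1$ is large.

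\textbf{Antipodal pairs.} Define an auxiliary graph $G$ on the occupied cells. Join $Q$ and $Q'$ if some $x\in Q$ and $y\in Q'$ satisfy $\|x-y\|\ge 1-\eps$. Then
\[
|\mathcal A_\eps(X)|\le \sum_{QQ'\in E(G)} a_Qa_{Q'}\le \lambda_{\max}(G)\sum_Q a_Q^2 .
\]
So the theorem follows from the bound $\lambda_{\max}(G)\le C\eps^{-1/2}$. To prove it I would use the standard estimate
\[
\lambda_{\max}\le \max_{QQ'\in E(G)}\sqrt{d(Q)\,d(Q')},
\]
which follows for instance from Perron--Frobenius applied to a suitably weighted vector. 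This reduces everything to a purely geometric claim: for every edge $QP$ of $G$, $d(Q)\,d(P)\lesssim \eps^{-1}$.

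\textbf{The geometric lemma.} All neighbors of $Q$ lie in the annulus of radii between $1-2\eps$ and $1+\eps$ around the center of $Q$, a band of width $O(\eps)$. Suppose these neighbors span an arc of length $L$, so that $d(Q)\lesssim 1+L/\eps$. Pick two extreme neighbors $a,b$ on this arc, at mutual distance about $L$. Now let $P$ be any neighbor of $Q$. Every neighbor $R$ of $P$ satisfies three constraints: $\|R-P\|\ge 1-O(\eps)$, $\|R-a\|\le 1+O(\eps)$, and $\|R-b\|\le 1+O(\eps)$, the last two from the diameter bound. Moreover $P$ lies on the far side of the circle around $Q$ relative to the arc.

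I expect a curvature (second-order) computation to show that the intersection of the outer annulus around $P$ with the two unit disks around $a$ and $b$ is a region of diameter $O(\eps/L+\eps)$. The reason is that the circle of radius $1$ around $P$ and the circles around $a,b$ are nearly tangent near $Q$, and moving along the annulus by $t$ pushes the point outside one of the two disks by about $tL$. This would give $d(P)\lesssim 1+1/L$, and hence
\[
d(Q)\,d(P)\lesssim \Bigl(1+\frac{L}{\eps}\Bigr)\Bigl(1+\frac1L\Bigr)\lesssim \frac1\eps,
\]
using $L\le O(1)$ and treating $L\lesssim\eps$ separately. This step is the main obstacle. The delicate points are the sharp second-order geometry, with constants uniform over all configurations of $a,b,P$, and the regime $\eps\lesssim L\lesssim\sqrt\eps$. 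In that regime I would first try replacing the worst case over edges by a dyadic decomposition of edges according to the length $L$, or by a weighted test vector $w_Q=d(Q)^{1/2}$, so that the bound $\lambda_{\max}\lesssim\eps^{-1/2}$ still follows.
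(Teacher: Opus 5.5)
\textbf{The edge-wise degree bound is false.} Your reduction up to the spectral problem is sound and is essentially the paper's: grid cells of side $\eps/2$, Cauchy--Schwarz for the neighboring pairs, $|\mathcal A_\eps(X)|\le\lambda_{\max}(G)\sum_Q a_Q^2$, and the target $\lambda_{\max}(G)=O(\eps^{-1/2})$. The gap is the step meant to deliver the eigenvalue bound. Your claim that $d(Q)\,d(P)=O(\eps^{-1})$ for every edge $QP$ fails, so the edge bound $\lambda_{\max}\le\max_{QP\in E}\sqrt{d(Q)d(P)}$ cannot give $\eps^{-1/2}$.

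For a counterexample, take a Reuleaux triangle of width $1$ with corners $P_0,P_1,P_2$. Let $X$ be a dense sample of its $\eps$-boundary layer that contains the corners.
\begin{itemize}
\item Every point within $\eps$ of the arc $P_2P_0$ is at distance at least $1-\eps$ from $P_1$. So the cell of $P_1$ is adjacent to the $\Theta(\eps^{-1})$ cells along that arc.
\item The same holds for the cell of $P_2$.
\item These two cells are adjacent, since $|P_1P_2|=1$.
\end{itemize}
Hence $\max_{QP\in E}\sqrt{d(Q)d(P)}=\Theta(\eps^{-1})$, which is only the trivial bound, even though $\lambda_{\max}(G)$ is of order $\eps^{-1/2}$ here.

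Your localization heuristic breaks exactly here. Take $Q$ to be the cell of $P_1$ (so $L\approx\pi/3$) and $P=a$ the cell of $P_2$. Then the constraints $\|R-P\|\ge 1-O(\eps)$ and $\|R-a\|\le 1+O(\eps)$ coincide. Meanwhile the unit disk around $b\approx P_0$ contains the whole arc $P_0P_1$, so nothing confines $R$. A product estimate $|AB|\cdot|CD|=O(\eps)$ only holds when the two long segments are opposite sides of a convex quadrilateral, not crossing diagonals. A high-degree vertex can therefore have a few high-degree neighbors. The failure is not confined to the intermediate range of $L$ you flagged (here $L$ is of order $1$); no pointwise-over-edges bound can work.

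\textbf{What is needed instead.} You need an averaged statement. The paper supplies it, combined with exactly your fallback vector $w_Q=\sqrt{d(Q)}$ fed into the Collatz--Wielandt bound. The missing lemma is a tail estimate: for every vertex $k$ and integer $t\ge 1$, the set $N_t$ of neighbors of $k$ with degree at least $t$ has $|N_t|\le C/(t\eps)$.

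To prove it:
\begin{itemize}
\item Order $N_t$ angularly around the center $P_k$ and take a median element $m$.
\item Let $i$ be the farthest neighbor of $m$ on one side of the line $P_kP_m$, and $j$ the farthest element of $N_t$ on the other side.
\item The quadrilateral $P_kP_iP_mP_j$ is convex with long opposite sides $P_iP_m$ and $P_kP_j$. The quadrilateral lemma (if $|AD|,|BC|\ge 1-\eps$ then $|AB|\cdot|CD|<3\eps$, used with $2\eps$) gives $|P_kP_i|\cdot|P_mP_j|=O(\eps)$.
\item Packing disjoint cells into thin annular sectors centered at $P_m$ and at $P_k$ gives $|P_kP_i|=\Omega(t\eps)$ and $|P_mP_j|=\Omega(\eps|N_t|)$.
\end{itemize}
Combining these gives $|N_t|=O(1/(t\eps))$. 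Splitting $\sum_{j\sim i}\sqrt{d_j}$ dyadically by $d_j$ and applying this tail bound yields $\sum_{j\sim i}\sqrt{d_j}=O(\eps^{-1/2}\sqrt{d_i})$, hence $\lambda_{\max}(G)=O(\eps^{-1/2})$. Without this lemma or an equivalent averaged one, your argument does not establish the spectral bound.
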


Theorem \ref{thm:main_single} is a consequence of a more general result. We study the relationship between $|\mathcal{N}_{\eps_1}(X)|$ and $|\mathcal{A}_{\eps_2}(X)|$ for parameters $0<\eps_1,\eps_2<1$, focusing on the minimum possible value of their ratio. Our main result determines this quantity up to a constant factor.     

\begin{thm}\label{thm:main}
There are universal constants $c_1,c_2>0$ such that for any $0<\varepsilon_1,\varepsilon_2<1$ and any finite set $X\subset \mathbb{R}^2$ of diameter at most $1$ with $|X|\ge c_1\varepsilon_1^{-2}$ the following holds: 
\[
\dfrac{|\mathcal{N}_{\eps_1}(X)|}{|\mathcal{A}_{\eps_2}(X)|} \ge
\begin{cases}
c_2\cdot \eps_1^2\cdot \eps_2^{-3/2}, & \eps_1 \le \eps_2; \\
 c_2\cdot \eps_1^{1/2}, & \eps_1 > \eps_2.
\end{cases}
\]
\end{thm}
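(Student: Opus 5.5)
Our plan is to follow the hint in the abstract and reduce everything to a spectral bound on an auxiliary graph, discretizing at scale $\eps_1$. First, cover the plane by a grid of squares of side $\eps_1/2$; call them cells. Let $m_Q$ be the number of points of $X$ in cell $Q$. Any two points in the same cell are $\eps_1$-neighbors, so
\[
|\mathcal N_{\eps_1}(X)|\ \ge\ \sum_Q \binom{m_Q}{2}\ \gtrsim\ \sum_Q m_Q^2 - n .
\]
The diameter bound confines $X$ to $O(\eps_1^{-2})$ cells. By Cauchy--Schwarz, $\sum_Q m_Q^2\gtrsim n^2\eps_1^2$, and so $\sum_Q m_Q^2\ge 2n$ once $n\ge c_1\eps_1^{-2}$. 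The $-n$ term is therefore harmless, and it suffices to bound $|\mathcal A_{\eps_2}(X)|\lesssim \lambda \sum_Q m_Q^2$ with the appropriate $\lambda$.

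Next, define the auxiliary graph $G$ on the cells. Join $Q$ and $Q'$ if some pair $x\in Q$, $y\in Q'$ has $\|x-y\|\ge 1-\eps_2$. Then
\[
|\mathcal A_{\eps_2}(X)|\le \sum_{QQ'\in E(G)} m_Qm_{Q'} = \tfrac12\, m^TAm \le \tfrac12\,\lambda_{\max}(A)\,\|m\|_2^2,
\]
where $A$ is the adjacency matrix of $G$. So the theorem reduces to
\[
\lambda_{\max}(A)\ \lesssim\ \eps_1^{-2}\eps_2^{3/2}\ \text{ for } \eps_1\le\eps_2, \qquad \lambda_{\max}(A)\lesssim \eps_1^{-1/2}\ \text{ for } \eps_1>\eps_2 .
\]
The second case follows from the first by monotonicity. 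Indeed, $\mathcal A_{\eps_2}\subseteq \mathcal A_{\eps_1}$ when $\eps_2<\eps_1$, so we may replace $\eps_2$ by $\eps_1$, and $\eps_1^2\eps_1^{-3/2}=\eps_1^{1/2}$. We only treat $\eps_1\le \eps_2$. Also, only cells meeting the convex hull $K$ of $X$ matter, and $K$ has diameter at most $1$.

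To bound $\lambda_{\max}$ we use the Schur test with a positive weight $w$: $\lambda_{\max}(A)\le \max_Q \frac{1}{w_Q}\sum_{Q'\sim Q} w_{Q'}$. The natural weight is the depth of a cell. If $Q$ is adjacent to $Q'$, then the segment joining them has length about $1$ inside a set of diameter at most $1$. Both endpoints must therefore lie within distance $\delta$ of the boundary of $K$, where $\delta=O(\eps_2+\eps_1)$. Moreover, the cells adjacent to a fixed $Q$ lie in the intersection of an annulus of radius roughly $1$ and width $O(\eps_2)$ around $Q$ with the diameter-$1$ region. Since $K$ is contained in the unit disk around $x$, this intersection is a cap of angular width about $\sqrt{\eps_2}$. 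Its area is about $\eps_2\cdot\sqrt{\eps_2}=\eps_2^{3/2}$, so it contains about $\eps_2^{3/2}\eps_1^{-2}$ cells. This already bounds the degrees of $G$ by $O(\eps_2^{3/2}\eps_1^{-2})$, which gives exactly the desired bound $\lambda_{\max}\le\Delta(G)$. So the key step is a uniform degree bound. We cannot simply count cells in the annulus: the region near the far boundary need not be small a priori. The argument must use that \emph{all} points of $X$, not only those in $Q$, lie within distance $1$ of every point of $Q$. Hence the neighbors of $Q$ lie in
\[
B(x,1)\cap \bigl(B(x,1-\eps_2-O(\eps_1))\bigr)^c\cap \bigcap_{z\in X} B(z,1),
\]
and the question is whether this lune-like set has angular extent $O(\sqrt{\eps_2})$.

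This is the main obstacle. A single disk $B(x,1)$ gives an annulus of full angular extent, which is far too large. For example, if $K$ is close to a Reuleaux triangle, a point $x$ at a vertex sees a whole arc of length about $1$ at distance roughly $1$. The degree of such a cell is then about $\eps_2\eps_1^{-2}$ rather than $\eps_2^{3/2}\eps_1^{-2}$, so a pure max-degree bound fails. In that case the heavy vertex has many neighbors but each neighbor has few, a star-like structure whose $\lambda_{\max}$ is only about the square root of the degree. We would therefore first try the Schur test with weights $w_Q=\deg(Q)^{1/2}$, or equivalently bound $\lambda_{\max}^2$ by the maximum over $Q$ of $\sum_{Q'\sim Q}\deg(Q')$. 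We then need a geometric lemma: if $Q$ sees an arc of angular length $\theta$, then every cell on that arc sees back only an arc of angular length $O(\eps_2/\theta)$. This is plausible because of the constant-width geometry, where the normals at the two ends of a diameter are antiparallel. With this lemma, the total $\sum_{Q'\sim Q}\deg(Q')$ is at most $(\theta\eps_2\eps_1^{-2})\cdot(\eps_2/\theta)\,\eps_2\eps_1^{-2}=\eps_2^3\eps_1^{-4}$. Taking the square root gives the target $\lambda_{\max}\lesssim\eps_2^{3/2}\eps_1^{-2}$. Proving this reciprocity lemma carefully, including the boundary effects at scale $\eps_1$, is where we expect most of the work to lie.
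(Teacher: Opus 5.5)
Your reduction to $\lambda_{\max}$ is sound. Keeping all $O(\eps_1^{-2})$ cells meeting $K$ in $m$ even handles the $-n$ term more cleanly than the paper, which restricts to the boundary layer and then needs a case analysis. Your weights $w_Q=\sqrt{\deg Q}$ are exactly the paper's test vector. The gap is the geometric lemma you plan to feed into the Schur test. The uniform claim that every cell on the arc seen by $Q$ sees back only an arc of length $O(\eps_2/\theta)$ is false, and the failure is not a scale-$\eps_1$ boundary effect.

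To see this in your own Reuleaux example, put $P_1=(0,0)$ and $P_2=(1,0)$. Let $C$ lie on the arc centered at $P_1$ at arc-distance $s$ from $P_2$, and let $D$ lie on the arc centered at $P_2$ at arc-distance $u$ from $P_1$. Then $|CD|^2=3+2\cos(s+u)-2\cos s-2\cos u=1-2su+O(su(s^2+u^2))$. So the corner cell at $P_1$ sees the whole opposite arc ($\theta\asymp 1$). A cell on that arc at distance $s$ from $P_2$, however, sees a stretch of length $\asymp\eps_2/s$ near $P_1$, which is $\gg\eps_2$ when $s\ll 1$. In a boundary strip of width $\asymp\eps_2$, this means the corner cell has $\asymp\eps_2^3\eps_1^{-4}/t$ neighbors of degree at least $t$, for $\eps_2^2\eps_1^{-2}\le t\le\eps_2\eps_1^{-2}$. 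The neighbors' degrees therefore have a $1/t$ tail, not a uniform cap.

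What is true in general is only a product inequality, the paper's Lemma~\ref{quadrilateral}: if $ABCD$ is convex of diameter at most $1$ with $|AD|,|BC|\ge 1-\eps$, then $|AB|\cdot|CD|<3\eps$. The paper converts this into the tail bound $\#\{Q'\sim Q:\deg Q'\ge t\}\le C\eps_2^3\eps_1^{-4}/t$ (Lemma~\ref{lem:tails}). It does so by taking the angular median among the high-degree neighbors and counting boxes packed into thin annular sectors. Proposition~\ref{sharp-deg-tail} shows this tail is sharp.

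Consequently your two spectral routes are not equivalent, and only one of them survives. The bound $\lambda_{\max}^2\le\max_Q\sum_{Q'\sim Q}\deg Q'$ amounts to summing $\sum_t\#\{Q'\sim Q:\deg Q'\ge t\}$, which is harmonic. On the configuration above it is $\asymp\eps_2^3\eps_1^{-4}\log(1/\eps_2)$, so this route loses a factor $\sqrt{\log(1/\eps_2)}$ and only recovers Korsky's bound.

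The weights $\sqrt{\deg Q}$ do yield the sharp bound from the $1/t$ tail, but through a dyadic decomposition rather than your product computation. Set $M:=\eps_2^3\eps_1^{-4}$ and $d=\deg Q$. Neighbors with $\deg Q'\le M/d$ contribute at most $d\sqrt{M/d}=\sqrt{Md}$. Each shell $2^k\le\deg Q'<2^{k+1}$ above that threshold contributes at most $\sqrt{2^{k+1}}\cdot C M 2^{-k}$, and these form a geometric series whose sum is again $O(\sqrt{Md})$. Hence $\sum_{Q'\sim Q}\sqrt{\deg Q'}=O(\eps_2^{3/2}\eps_1^{-2}\sqrt{\deg Q})$, which is what the Schur test needs.

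The missing ingredients are therefore three:
\begin{itemize}
\item the product lemma in place of your uniform reciprocity lemma;
\item the $1/t$ tail estimate it yields;
\item the observation that the square-root damping in the weights is precisely what makes that tail summable, which the 2-path bound lacks.
\end{itemize}
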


Theorem~\ref{thm:main}, together with the constructions in Section~\ref{sec:examples}, determines the optimal order of magnitude throughout the whole parameter range. In particular, it yields a tight resolution to Steinerberger's question, as it implies that $R_\eps(X)\geq c_2\eps^{1/2}$. Throughout the paper we focus on the two-parameter version. We will see examples showing that Theorem \ref{thm:main} is optimal, and we will discuss why a transition in behavior occurs at  $\eps_1=\eps_2$.

As far as the authors are aware, this is the first comprehensive result in this direction of study. There are a couple of results whose aim is to maximize the average distance between two points within a set of bounded diameter. For example, Witsenhausen \cite{Witsenhausen1974} studied the problem of maximizing $\sum_{i,j}\|x_i-x_j\|^2$ for a finite set $X:=\{x_1,x_2,\ldots,x_n\}\subset \mathbb{R}^2$ of diameter $1$, while Pillichshammer \cite{Pillichshammer2001, Pillichshammer2000O} analyzed $\sum_{i,j}\|x_i-x_j\|$. Theorem \ref{thm:main} relates to these results in the sense that having many pairs with maximum contribution (at least $1-\eps$) forces the existence of many pairs with a very small contribution (less than $\eps$). 

The proof of Theorem \ref{thm:main} was completed in close collaboration with the artificial intelligence ChatGPT 5.4.  
The spirit of our argument comes fairly close to the proofs of Theorem \ref{thm:steinerberger} and Theorem \ref{thm:korsky}. Following the ideas of Steinerberger, we begin by observing that only the points in $X$ that are within $\eps_2$ distance from the boundary of $\text{conv}(X)$ can contribute to $\eps_2$-antipodal pairs. Then we simplify the problem by covering the region near the boundary of $\text{conv}(X)$ with boxes of diameter less than $\eps_1$. When $\eps_1\leq \eps_2$, a natural choice for these boxes is given by squares of side length $\eps_1/2$. On the other hand when $\eps_1>\eps_2$, and especially when $\eps_2=o(\eps_1)$, these squares would cover a much larger area than needed, and a better choice is given by rectangles of dimension $\eps_1/2\times \eps_2/2$. 
This also explains the dichotomy in the conclusion of Theorem \ref{thm:main}. Next, we define a graph $G_X$, called the \emph{box graph}, on these boxes by connecting pairs that are at least $1-\eps_2$ far from each other. The problem reduces to bounding the largest eigenvalue of the adjacency matrix of the box graph. Using a key geometric lemma, we show that $G_X$ satisfies a natural sparsity condition. Roughly speaking, there is a universal constant $c>0$ such  that any vertex has at most $c|V(G_X)|\cdot (\eps_2/\eps_1)^{2} t^{-1}$ neighbors of degree at least $t$ for any integer $t>0$. This property allows us to derive the optimal bound on the eigenvalue of the box graph.  

After introducing the necessary metric and linear algebraic tools in Section \ref{sec:linalg}, we perform the reduction to the box graph in Section \ref{sec:reduction}. In Section \ref{sec:geom} we present Lemma \ref{quadrilateral}, which encompasses the geometry of the box graph and shows how it implies the sparsity condition (Lemma \ref{lem:tails}). This allows us to finish the proof of Theorem \ref{thm:main}. Next, in Section \ref{sec:examples} we discuss various extremal examples which demonstrate the sharpness of our bounds. Finally, in Section \ref{sec:conclude}, we discuss some open problems and provide additional insight into the selection of test vectors we have investigated for the Collatz-Wielandt bound before reaching the effective choice from Proposition \ref{prop:sqrt-control}.

\section{Linear algebra and metric prerequisites}\label{sec:linalg}

In this short section, we recall a couple of useful results from linear algebra. The terminology we use is quite standard and our presentation is largely self-contained; however, the reader can consult \cite{booklinalg} for a more detailed discussion.


The results presented below could be generalized to a broader context, but for our purposes, it is enough to restrict ourselves to real-valued symmetric matrices. What is essential in this setting is that the eigenvalues of such matrices are real-valued, and the largest, denoted by $\lambda_1(A)$ for a matrix $A$, satisfies $\lambda_1(A)=\underset{\mathbf{x}\in \mathbb{R}^n\setminus\{0\}}{\max}\dfrac{\langle \mathbf{x},A\mathbf{x} \rangle}{\langle \mathbf{x},\mathbf{x} \rangle}$. 

When the symmetric matrix $A$ also has only non-negative entries, by the Perron-Frobenius theorem there is an eigenvector $\mathbf{x}\in \mathbb{R}^n$ for $\lambda_1(A)$ whose entries are all non-negative. This observation is crucial to proving the following useful proposition. 

\begin{prop}[Collatz--Wielandt bound]\label{vector-test-criterion}
 Let $A$ be a real symmetric $n\times n$ non-zero matrix with non-negative entries. Suppose there is a vector $\mathbf{x}\in \mathbb{R}^n_{> 0}$ and $\lambda>0$ such that $(A\mathbf{x})_i\leq \lambda x_i$ for each coordinate $i\in [n]$. Then $$\lambda\geq \lambda_1(A)>0.$$
\end{prop}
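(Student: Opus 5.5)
We use the Perron--Frobenius eigenvector mentioned just before the statement and pair it against the test vector $\mathbf{x}$. Since $A$ is real symmetric with non-negative entries, there is an eigenvector $\mathbf{v}\in\mathbb{R}^n$ with $\mathbf{v}\ne 0$, $v_i\ge 0$ for all $i$, and $A\mathbf{v}=\lambda_1(A)\mathbf{v}$.

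\emph{Step 1: $\lambda\ge\lambda_1(A)$.} Multiply the coordinatewise inequality $(A\mathbf{x})_i\le \lambda x_i$ by $v_i\ge 0$ and sum over $i$. This gives $\langle \mathbf{v},A\mathbf{x}\rangle\le \lambda\langle \mathbf{v},\mathbf{x}\rangle$. By symmetry of $A$,
\[
\langle \mathbf{v},A\mathbf{x}\rangle=\langle A\mathbf{v},\mathbf{x}\rangle=\lambda_1(A)\langle \mathbf{v},\mathbf{x}\rangle .
\]
Because $\mathbf{x}$ has strictly positive entries and $\mathbf{v}$ is non-negative and non-zero, $\langle \mathbf{v},\mathbf{x}\rangle>0$. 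Dividing by it yields $\lambda_1(A)\le\lambda$.

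\emph{Step 2: $\lambda_1(A)>0$.} We use the Rayleigh quotient characterization recalled above. Since $A\neq 0$ and its entries are non-negative, some entry $a_{jk}$ is strictly positive.
\begin{itemize}
\item If $j=k$, take $\mathbf{y}=e_j$. Then $\langle \mathbf{y},A\mathbf{y}\rangle=a_{jj}>0$.
\item If $j\ne k$, take $\mathbf{y}=e_j+e_k$. Then $\langle \mathbf{y},A\mathbf{y}\rangle=a_{jj}+a_{kk}+2a_{jk}\ge 2a_{jk}>0$.
\end{itemize}
In either case the Rayleigh quotient at $\mathbf{y}$ is positive, so $\lambda_1(A)>0$.

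The only delicate point is making sure $\langle\mathbf{v},\mathbf{x}\rangle\neq 0$. This is exactly why the hypothesis demands $\mathbf{x}$ strictly positive and why we need a non-negative Perron eigenvector. Both are supplied by the hypotheses and by the Perron--Frobenius observation, so no real obstacle remains.
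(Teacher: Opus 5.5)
Your proof is correct, and it reaches the main inequality differently from the paper. The paper uses the classical Collatz--Wielandt min-ratio comparison. It sets $t:=\min\{x_i/v_i : v_i\neq 0\}$, so that $\mathbf{x}\ge t\mathbf{v}$ coordinatewise with equality at some coordinate $j$. Nonnegativity of $A$ then gives $\lambda x_i\ge (A\mathbf{x})_i\ge t\lambda_1(A)v_i$, and evaluating at $i=j$ yields $\lambda\ge\lambda_1(A)$.

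You instead pair the coordinatewise inequality with the nonnegative Perron vector $\mathbf{v}$ and use symmetry to move $A$ across the inner product. This is shorter and avoids choosing an extremal coordinate. Its price is that symmetry makes $\mathbf{v}$ serve as a left eigenvector. For a non-symmetric nonnegative matrix you would pair with a nonnegative left Perron vector, whereas the paper's comparison works directly with a right eigenvector and never uses symmetry in the key step. In substance the two arguments are equally general: yours is a one-line duality argument, the paper's is the standard domination argument.

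Your Step~2 also gives an explicit justification, via the test vectors $e_j$ or $e_j+e_k$, of the positivity $\lambda_1(A)>0$, which the paper simply calls clear from the Rayleigh quotient.
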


\begin{proof}
  Recall that $\lambda_1(A)=\max_{||v||=1}\langle v,Av\rangle$, so clearly $\lambda_1(A)>0$. Moving forward, pick an eigenvector $v$ for $\lambda_1(A)$ with non-negative coordinates $v_i$. Set $t:=\min \{x_i/v_i: v_i\neq 0\}$ and note that for each coordinate $i$ we have $\lambda x_i\geq (A\mathbf{x})_i\geq \left(A(tv)\right)_i$ since $x_j\geq tv_j$ for all $j\in[n]$ and $A$ has nonnegative entries. It follows that $\lambda x_i\geq t(Av)_i=t\lambda_1(A)v_i$. Finally, there is a coordinate $j$ for which $x_j=tv_j\neq 0$ and plugging this into the inequality above yields $\lambda\geq \lambda_1(A)$. 
\end{proof}

From this point onward, the eigenvalues, eigenvectors, and spectrum of a graph will refer to those of its adjacency matrix, which is the square $(0,1)$-matrix that encodes the edges of the graph.

We conclude this section with a metric result which roughly says that if a set $X$ of bounded diameter is large enough, then the number of pairs of points from $X$ which are close to each other grows at least linearly with respect to $|X|$. We shall use this simple packing estimate to rule out the case where too few points lie in the boundary layer.

\begin{prop}\label{prop:many-close-pairs}
    For any real number $a\geq 1$ there is a constant $c_a>0$ such that for any $0<\eps<1$ and any finite set $X\subset \mathbb{R}^2$ of diameter at most $1$ with size $|X|\geq c_a\cdot \varepsilon^{-2}$, we have: 
    $$|\mathcal{N}_\eps(X)|\geq a|X|.$$
\end{prop}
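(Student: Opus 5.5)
We argue by a packing and bucketing estimate combined with convexity. Since $X$ has diameter at most $1$, it lies in a closed disk of radius $1$ (centered at any point of $X$), so in particular in an axis-parallel square $Q$ of side length $2$. We partition $Q$ into a grid of axis-parallel cells of side length $\eps/\sqrt{2}$, each having diameter at most $\eps$. The number of cells is at most $m:=\lceil 2\sqrt2/\eps\rceil^2\le C\eps^{-2}$ for an absolute constant $C$ (say $C=32$, using $\eps<1$). Any two distinct points lying in the same cell form a pair in $\mathcal{N}_\eps(X)$.

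Let $n_1,\dots,n_m$ be the numbers of points of $X$ in the cells, so that $\sum_k n_k=|X|=:n$. Then
\[
|\mathcal{N}_\eps(X)|\ \ge\ \sum_{k=1}^m \binom{n_k}{2}\ \ge\ m\binom{n/m}{2}=\frac{n}{2}\Bigl(\frac{n}{m}-1\Bigr),
\]
where the second inequality is Jensen's inequality for the convex function $t\mapsto t(t-1)/2$. This inequality holds for real arguments, so no divisibility is needed. Using $m\le C\eps^{-2}$, we obtain $|\mathcal{N}_\eps(X)|\ge \frac{n}{2}\bigl(\frac{n\eps^2}{C}-1\bigr)$.

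Now set $c_a:=C(2a+1)$. If $n\ge c_a\eps^{-2}$, then $n\eps^2/C\ge 2a+1$, and hence $|\mathcal{N}_\eps(X)|\ge \frac{n}{2}\cdot 2a=a|X|$, as required.

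There is no real obstacle here. The only points needing care are that the cell diameter is at most $\eps$, not merely the side length, and that the number of cells is bounded uniformly by a constant times $\eps^{-2}$ for all $0<\eps<1$, with the ceiling absorbed into $C$. The pairs counted are unordered pairs $\{x,y\}$, which matches the definition of $\mathcal{N}_\eps(X)$.
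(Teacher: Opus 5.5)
Your proof is correct and follows essentially the same route as the paper: you split the points among $O(\eps^{-2})$ regions of diameter at most $\eps$, then use convexity to get $|\mathcal{N}_\eps(X)|\ge \frac{n}{2}\bigl(\frac{n}{m}-1\bigr)$ and take $c_a = C(2a+1)$. The paper makes the same choice of constant, and differs only in cosmetics: it covers $\mathrm{conv}(X)$ by balls of radius $\eps/2$ instead of your grid of squares, and it cites Cauchy--Schwarz where you cite Jensen.
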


\begin{proof}
    Since $X$ has diameter $1$, so does its convex hull $\text{conv}(X)$, meaning its area is bounded. Thus $\text{conv}(X)$ can be covered by a collection of $M:=\lfloor r\cdot\eps^{-2}\rfloor$ balls of radius $\eps/2$, where $r>0$ is a universal constant. We distribute the points of $X$ to these balls such that each point gets assigned to exactly one of the balls containing it. Suppose the $M$ balls contain $y_1,y_2,\ldots,y_M$ points from $X$ after the assignment, with $\sum_{i=1}^M y_i=|X|$. Since any pair of points assigned to the same ball is $\eps$-neighboring, we infer that ${\left|\mathcal{N}_\eps(X)\right|\geq \sum\limits_{i\in [M]}\dbinom{y_i}{2}= \dfrac{1}{2}\Bigl(\Bigl(\sum\limits_{i\in [M]}y_i^2\Bigr) - |X|\Bigr)}$.

    By the Cauchy-Schwarz inequality we get $\sum_{i=1}^M y_i^2\geq |X|^2/M$. Hence, for any $c_a\geq 2ra+r$, we obtain, as desired, that 
    
    $$\left|\mathcal{N}_\eps(X)\right|\geq\frac{|X|^2}{2M}-\dfrac{|X|}{2}\geq \frac{c_a\cdot\eps^{-2}\cdot |X|}{2r\cdot \eps^{-2}}-\dfrac{|X|}{2}\geq \frac{(c_a-r)\cdot|X|}{2r}\geq a|X|.$$
\end{proof}

\section{The box graph and the spectral reduction}\label{sec:reduction}

    In this section we define a natural auxiliary graph for a point set $X$ of finite diameter, whose vertices will represent clusters of neighboring points, while its edges will encode antipodal pairs. We then show how to reduce the problem to bounding the largest eigenvalue of this graph. 
    
    Let us start by observing that it is enough to prove Theorem \ref{thm:main} only for $\eps_1\leq \eps_2$. Indeed, if $\eps_1>\eps_2$, then any $\eps_2$-antipodal pair is $\eps_1$-antipodal as well. Hence, using the $\eps_2=\eps_1$ case, we obtain $\left|\mathcal{N}_{\eps_1}(X)\right|\geq  c_2\cdot \eps_1^{2}\cdot\eps_1^{-3/2}\left|\mathcal{A}_{\eps_1}(X)\right| = c_2\cdot \eps_1^{1/2}\left|\mathcal{A}_{\eps_1}(X)\right|\geq c_2\cdot \eps_1^{1/2}\left|\mathcal{A}_{\eps_2}(X)\right|$. Therefore, from now on we assume that $\eps_1\le\eps_2$.



Let $K:=\text{conv}(X)$ denote the convex hull of $X$. If a point $x\in X$ is at a distance greater than $\varepsilon_2$ from the boundary of $K$, i.e. if $\text{dist}(x,\partial K)>\varepsilon_2$, then $x$ does not contribute  any pair to $\mathcal{A}_{\eps_2}(X)$. Therefore, if we define $K_{\eps_2}:=\{x\in K:\text{dist}(x,\partial K)\le {\eps_2}\}$ and set $X_{\eps_2}=X\cap K_{\eps_2}$, we get $|\mathcal{A}_{\eps_2}(X_{\eps_2})|=|\mathcal{A}_{\eps_2}(X)|$, and $|\mathcal{N}_{\eps_1}(X_{\eps_2})|\leq |\mathcal{N}_{\eps_1}(X)|$. This shows that it is enough to prove the desired inequality for $X_{\eps_2}$ and hence from now on our focus will be on $X_{\eps_2}$. 

Recall that the perimeter of $K$ is bounded by a constant since it has diameter $1$, thus $K_{\eps_2}$, whose area is of order $\eps_2$, can be covered by $N=\Theta \left(\varepsilon_2\cdot\varepsilon_1^{-2}\right)$ squares of side length $\eps_1/2$ whose centers lie inside $K_{\eps_2}$ while the distance between any two of these centers is at least $\eps_1/4$. In other words, the center of a square is never covered by another square. We will refer to these squares as \emph{boxes} $B_1,B_2,\ldots,B_N$.

The \emph{box graph} $G_X$ is the graph on vertex set $[N]:=\{1,2,3,\ldots,N\}$ where we connect $i$ and $j$ if there are $x\in X\cap B_i$ and $y\in X\cap B_j$ such that $\|x-y\|\geq 1-\eps_2$. We write $i\sim j$ to express that $i$ and $j$ are adjacent vertices in $G_X$ and also let $d_i:=\text{deg}_{G_X}(i)$. 
Furthermore, we assign each vertex of $X_{\eps_2}$ to one of the boxes containing it, let $n_i$ denote the number of points of $X$ that are assigned to the box $B_i$.

We note that the box graph is not unique for a set $X$, as there are different ways to cover $K_{\eps_2}$ by boxes of size $\eps_1/2$, but any one will be suitable for our purposes. We will use the notation $G_X(\eps_1,\eps_2)$ to make it clear that we refer to a box graph where we used squares of side length $\eps_1/2$ to cover $K_{\eps_2}$.  We will now see how $G_X(\eps_1,\eps_2)$ helps us compare $\mathcal{N}_{\eps_1}(X)$ with $\mathcal{A}_{\eps_2}(X)$.  Note that if $|\mathcal{N}_{\eps_1}(X)|\ge (\eps_1/\eps_2)^2 \cdot|\mathcal{A}_{\eps_2}(X)|$ then Theorem \ref{thm:main} holds, so it is enough to consider the case $|\mathcal{N}_{\eps_1}(X)|< (\eps_1/\eps_2)^2 \cdot|\mathcal{A}_{\eps_2}(X)|$.

\begin{prop}\label{prop:eigen}
    There is a universal constant $c_1>0$ such that for any $0<\eps_1\leq \eps_2<1$ the following statement holds. Suppose $X\subset \mathbb{R}^2$ is a finite set of diameter at most $1$, with size $|X|\geq c_1\varepsilon_1^{-2}$, for which $|\mathcal{N}_{\eps_1}(X)|< (\eps_1/\eps_2)^2 \cdot|\mathcal{A}_{\eps_2}(X)|$. Then for any box graph $G_X(\eps_1,\eps_2)$ associated to it, the following inequality holds: $$4\lambda_1\big(G_X(\eps_1,\eps_2)\big)\cdot|\mathcal{N}_{\eps_1}(X)|\geq |\mathcal{A}_{\eps_2}(X)|.$$
\end{prop}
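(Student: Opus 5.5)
We bound $|\mathcal{A}_{\eps_2}(X)|$ above by a quadratic form in the vector $\mathbf{n}=(n_1,\dots,n_N)$, and bound $|\mathcal{N}_{\eps_1}(X)|$ below by $\langle\mathbf{n},\mathbf{n}\rangle$; the Rayleigh quotient characterization of $\lambda_1$ then links the two. By the reduction preceding the proposition we work with $X_{\eps_2}$. Every antipodal pair $\{x,y\}$ has $x$ assigned to some box $B_i$ and $y$ to some box $B_j$. Since $\|x-y\|\ge 1-\eps_2$, the definition of the box graph gives $i\sim j$. Here $i\neq j$ automatically, because a box has diameter $\eps_1/\sqrt2<1-\eps_2$ in the relevant range. (If $\eps_2$ is close to $1$ this needs a small check; the degenerate regime can be absorbed into constants, or handled by observing that the hypothesis forces $\eps_2$ not too large relative to the trivial bounds.) Counting ordered pairs, we get
\[
|\mathcal{A}_{\eps_2}(X)|\le \sum_{i\sim j} n_i n_j=\tfrac12\langle \mathbf{n},A\mathbf{n}\rangle\le \tfrac12\lambda_1(G_X)\,\langle\mathbf{n},\mathbf{n}\rangle ,
\]
where $A$ is the adjacency matrix. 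The sum counts each unordered edge once and $\langle \mathbf n,A\mathbf n\rangle$ counts it twice.

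Next, every pair of points assigned to the same box is $\eps_1$-neighboring, since the box diameter $\eps_1/\sqrt2$ is at most $\eps_1$. Hence
\[
|\mathcal{N}_{\eps_1}(X)|\ge\sum_i\binom{n_i}{2}=\tfrac12\Bigl(\sum_i n_i^2-|X_{\eps_2}|\Bigr).
\]
The key remaining step is to show that the linear error term $|X_{\eps_2}|$ is negligible, say $|X_{\eps_2}|\le |\mathcal{N}_{\eps_1}(X)|$. Then $\sum_i n_i^2\le 4|\mathcal{N}_{\eps_1}(X)|$ (even $3$ suffices). Combining with the first display gives $|\mathcal{A}_{\eps_2}(X)|\le 2\lambda_1 |\mathcal{N}_{\eps_1}(X)|$, which is stronger than needed, so there is slack in the constant $4$.

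The main obstacle is this control of $|X_{\eps_2}|$ by $|\mathcal{N}_{\eps_1}(X)|$. This is where the size assumption $|X|\ge c_1\eps_1^{-2}$ and the hypothesis $|\mathcal{N}_{\eps_1}(X)|<(\eps_1/\eps_2)^2|\mathcal{A}_{\eps_2}(X)|$ must enter. The plan is as follows. Proposition~\ref{prop:many-close-pairs} with $a=1$ and $c_1=c_a$ gives $|\mathcal{N}_{\eps_1}(X)|\ge |X|\ge |X_{\eps_2}|$ for the whole set $X$. We then only need the inequality with $\mathcal N_{\eps_1}(X)$ rather than $\mathcal N_{\eps_1}(X_{\eps_2})$ on the right. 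Indeed, the lower bound $\sum_i\binom{n_i}{2}$ counts pairs inside $X_{\eps_2}\subseteq X$, so it is still at most $|\mathcal N_{\eps_1}(X)|$. Thus
\[
\sum_i n_i^2\le 2|\mathcal N_{\eps_1}(X)|+|X_{\eps_2}|\le 3|\mathcal N_{\eps_1}(X)|.
\]
If this direct route works, the extra hypothesis is not even needed. It would still be useful elsewhere, for example to guarantee that $\mathcal A_{\eps_2}(X)\neq\emptyset$ so that $G_X$ has an edge and $\lambda_1>0$. I would double-check the edge case where $\mathcal A_{\eps_2}$ is empty, where the inequality is trivial, and the case where the box diameter exceeds $1-\eps_2$, which I would handle by shrinking the constants.
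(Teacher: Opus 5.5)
Your argument is correct, and it departs from the paper's at exactly the step you flag as the main obstacle: controlling the linear term $|X_{\eps_2}|$.

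\textbf{The paper's route.} The paper controls $|X_{\eps_2}|$ by $\sum_i n_i^2$ itself. Cauchy--Schwarz gives $\sum_i n_i^2\ge |X_{\eps_2}|^2/N$, which dominates $4|X_{\eps_2}|$ once $|X_{\eps_2}|\ge 4N$. This lower bound on the population of the boundary layer is proved by contradiction: the paper applies Proposition~\ref{prop:many-close-pairs} with the large value $a=c_3^2/2$ and uses the hypothesis $|\mathcal{N}_{\eps_1}(X)|<(\eps_1/\eps_2)^2|\mathcal{A}_{\eps_2}(X)|$. If $|X_{\eps_2}|<4N$, then $|\mathcal{A}_{\eps_2}(X)|<8N^2$ is too small to satisfy that hypothesis.

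\textbf{Your route.} You compare $|X_{\eps_2}|$ with the global count, $|X_{\eps_2}|\le|X|\le|\mathcal{N}_{\eps_1}(X)|$, using Proposition~\ref{prop:many-close-pairs} with $a=1$. This gives $\sum_i n_i^2\le 3|\mathcal{N}_{\eps_1}(X)|$ immediately.

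\textbf{Comparison.} Your route is shorter and needs only $a=1$. It also improves the constant to $\tfrac32$, since you count edges unordered. In the main regime it makes the hypothesis $|\mathcal{N}_{\eps_1}|<(\eps_1/\eps_2)^2|\mathcal{A}_{\eps_2}|$, and hence the case split in the proof of Theorem~\ref{thm:main}, unnecessary. The paper's route yields something slightly stronger: the certified $\eps_1$-neighboring pairs all lie in $X_{\eps_2}$. The statement does not need this.

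\textbf{The degenerate case.} This is the case $\eps_1/\sqrt2\ge 1-\eps_2$, which the paper silently ignores. Your hedges there are not right as stated. Shrinking constants does not help, and the hypothesis does not force $\eps_2$ to be small. What works is the following:
\begin{itemize}
\item Antipodal pairs lying in a single box number at most $\sum_i\binom{n_i}{2}\le|\mathcal{N}_{\eps_1}(X)|$. Hence $|\mathcal{A}_{\eps_2}(X)|\le(\tfrac32\lambda_1+1)|\mathcal{N}_{\eps_1}(X)|$.
\item If $G_X$ has an edge, then $\lambda_1\ge 1$, and this bound is at most $\tfrac52\lambda_1|\mathcal{N}_{\eps_1}(X)|$.
\item If $G_X$ is edgeless, then every antipodal pair lies in a single box. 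So $\mathcal{A}_{\eps_2}(X)\subseteq\mathcal{N}_{\eps_1}(X)$, which contradicts the hypothesis.
\end{itemize}
Alternatively, you can allow loops in $G_X$. In your approach, this is the only place where the hypothesis is actually needed.
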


\begin{proof}
 Suppose $G_X(\eps_1,\eps_2)$ is an $N$-vertex graph and let $B$ denote its adjacency matrix. Furthermore, let $\mathbf{n}:=(n_1,n_2,\ldots,n_N)$ be the vector whose $i^{\text{th}}$ coordinate counts how many points of $X_{\eps_2}$ are assigned to the box $B_i$.
    
On the one hand, we note that $|\mathcal{A}_{\eps_2}(X)|\leq \sum_{i\sim j}n_in_j=\langle\bf{n},\emph{B}\bf{n}\rangle$  since any pair $\{x,y\}\subset X$ with $\|x-y\|\geq 1-\eps_2$ must come from two boxes $B_i$ and $B_j$ which are neighbors in the box graph. If we show that $4|\mathcal{N}_{\eps_1}(X)|\geq \sum_{i=1}^Nn_i^2=\langle\bf{n},\bf{n}\rangle$, then we can conclude that:
    $$\dfrac{|\mathcal{A}_{\eps_2}(X)|}{|\mathcal{N}_{\eps_1}(X)|}\leq \dfrac{4\langle\bf{n},\emph{B}\bf{n}\rangle}{\langle\bf{n},\bf{n}\rangle}\leq 4\cdot \max_{\bf{x}\in\mathbb{R}^N\setminus\{0\}}\dfrac{\langle\bf{x},\emph{B}\bf{x}\rangle}{\langle\bf{x},\bf{x}\rangle}=4\lambda_1(B).$$

Hence, we focus on lower bounding $|\mathcal{N}_{\eps_1}(X)|$. Any pair of points that lie in the same box $B_i$ must be $\eps_1$-neighboring, thus  
$|\mathcal{N}_{\eps_1}(X)|\geq \displaystyle\sum_{i\in [N]}\dbinom{n_i}{2}= \dfrac{1}{2}\displaystyle\sum_{i\in [N]}n_i^2 - \dfrac{|X_{\eps_2}|}{2}$. We are only left to show that we can pick $c_1$ so that $\dfrac{|X_{\eps_2}|}{2}$ is smaller than $ \dfrac{1}{4}\displaystyle\sum_{i\in [N]}n_i^2$.
Indeed $\displaystyle\sum_{i\in[N]} n_i^2\geq N^{-1}|X_{\eps_2}|^2 $ by the Cauchy-Schwarz inequality. Hence  whenever $|X_{\eps_2}|\geq 4N$, we obtain  $\displaystyle\sum_{i\in[N]} n_i^2\geq\frac{|X_{\eps_2}|^2}{N}>4|X_{\eps_2}|$. 
In this case, it would follow that $|\mathcal{N}_\eps(X)|\geq \dfrac{1}{4}\displaystyle\sum_{i\in [N]}n_i^2$.

As $N=\Theta \left(\varepsilon_2\cdot\varepsilon_1^{-2}\right)$ there is a universal constant $c_3>4\eps_1^2\eps_2^{-1}N$ for any box graph. Using Proposition \ref{prop:many-close-pairs} with $a=c_3^2/2$, we can fix a universal constant $c_1\ge1$ such that  $|\mathcal{N}_{\eps_1}(X)|\geq (c_3^2/2)\cdot|X|$ whenever $|X|\geq c_1\cdot \eps_1^{-2}$. We claim that $|X_{\eps_2}|\geq 4N$ for this choice of $c_1$.

 Indeed, suppose $|X_{\eps_2}|<4N$. Then we get
$\left|\mathcal{A}_{\eps_2}(X)\right|\leq \dbinom{|X_{\eps_2}|}{2}<8N^2\le c_3^{2\ }\eps_1^{-4\ }\eps_2^{2}/2$. Also, using $|X|>c_1\eps_1^{-2}$ and $c_1>1$, we have  $|\mathcal{N}_{\eps_1}(X)|\geq (c_3^2/2)\cdot|X|\ge(c_3^2/2)\cdot\eps_1^{-2}$.
 This would give us $|\mathcal{N}_{\eps_1}(X)|\geq (\eps_1/\eps_2)^2\cdot|\mathcal{A}_{\eps_2}(X)|$, contradicting the assumption of the proposition. 
\end{proof}
 

Therefore, our current focus is on finding an optimal upper bound for $\lambda_1\big(G_X(\eps_1,\eps_2)\big)$.
We conclude this section by observing the trivial bound given by $\lambda_1(G_X)\leq |V(G_X)|=\Theta\big(\varepsilon_2 \varepsilon_1^{-2}\big)$. Indeed, this follows from Proposition \ref{vector-test-criterion} applied with the vector $\mathbf{1}:=(1,1,\ldots,1)$.

\section{Geometry gets involved}\label{sec:geom}
We are yet to exploit the geometry of the box graph. For convenience, if $P_1$ and $P_2$ are two points in the plane we let $|P_1P_2|$ denote the length of the closed segment determined by them.

 We start with a simple lemma on convex quadrilaterals. The lemma itself is very similar in nature to Theorem \ref{thm:main}, it says that if we have two pairs of points at distance $1-\varepsilon$, then their endpoints cluster to some degree. 

\begin{lem}\label{quadrilateral}
    Let $0<\varepsilon<1$ and let $A,B,C,D$ be the vertices of a convex quadrilateral in counterclockwise order in the plane whose diameter is at most $1$. If the lengths of the segments $AD$ and $BC$ are $\geq 1-\varepsilon$, then $|AB|\cdot |CD|<3\varepsilon$. 
\end{lem}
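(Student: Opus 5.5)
We place the origin at the intersection point $O$ of the diagonals $AC$ and $BD$, which exists because the quadrilateral is convex. Write $a=|OA|$, $b=|OB|$, $c=|OC|$, $d=|OD|$, and let $\theta$ be the angle $\angle AOD=\angle BOC$, so that $\angle AOB=\angle COD=\pi-\theta$. Since the diameter is at most $1$, both diagonals satisfy $a+c\le 1$ and $b+d\le 1$. The sides $AD$ and $BC$ are opposite sides. The idea is that these two long opposite sides force the triangles $AOD$ and $BOC$ to be nearly degenerate, so $\theta$ is close to $\pi$. They also force $a\approx b$ and $c\approx d$. Together these make $AB$ and $CD$ (short in the appropriate product sense) controlled by the law of cosines.

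\textbf{Step 1 (linear constraints).} By the triangle inequality, $a+d\ge |AD|\ge 1-\eps$ and $b+c\ge |BC|\ge 1-\eps$. Combining these with $a+c\le 1$ and $b+d\le1$ gives:
\begin{itemize}[nosep]
\item $|c-d|\le\eps$ and $|a-b|\le \eps$ (for example $a\ge 1-\eps-d\ge b-\eps$);
\item $a+d\le 2-(b+c)\le 1+\eps$, and likewise $b+c\le 1+\eps$.
\end{itemize}
Hence the triangle-inequality slack satisfies $a+d-|AD|\le 2\eps$, and similarly for $BOC$.

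\textbf{Step 2 (near-degeneracy).} Put $u=1+\cos\theta\in[0,2]$. The law of cosines gives $(a+d)^2-|AD|^2=2ad\,u$. Since $(a+d)^2-|AD|^2=(a+d-|AD|)(a+d+|AD|)\le 2\eps\cdot 2(1+\eps)$, we get $ad\,u\le 2\eps(1+\eps)$. The same argument gives $bc\,u\le 2\eps(1+\eps)$.

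\textbf{Step 3 (the product).} Again by the law of cosines, now with angle $\pi-\theta$, we have $|AB|^2=a^2+b^2+2ab\cos\theta=(a-b)^2+2ab\,u$, and similarly $|CD|^2=(c-d)^2+2cd\,u$. Multiplying and using Step 1,
\[
|AB|^2|CD|^2\le \eps^4+2\eps^2u(ab+cd)+4(ad\,u)(bc\,u).
\]
Here $ab\cdot cd=ad\cdot bc$ was used to rewrite the last term. Step 2 bounds that last term by $16\eps^2(1+\eps)^2$.

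\textbf{Main obstacle.} The crude bounds $u\le 2$ and $ab+cd\le 1$ give a total of order $\eps^2$, which already yields $|AB|\cdot|CD|=O(\eps)$. The difficulty is getting the explicit constant $3$, and I expect that to require care. To sharpen the estimate I would try two refinements:
\begin{itemize}[nosep]
\item replace the factor $1+\eps$ by using $|AD|\le 1$, so that $a+d+|AD|\le 2+\eps$;
\item bound the middle term via $ab\,u\le (a+\eps)\,a\,u$ and compare it with $ad\,u$, after a case split on which of $a,b,c,d$ is smallest.
\end{itemize}
If these refinements still fall short of $3$, the fallback is to optimise directly over the constrained configuration: maximise $|AB|\cdot|CD|$ subject to the diagonal constraints $a+c\le1$, $b+d\le1$ and the side constraints $|AD|,|BC|\ge 1-\eps$.
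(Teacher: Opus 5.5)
Your setup is sound and every inequality in Steps 1--3 is correct. As written, however, the argument only yields
\[
|AB|^2|CD|^2\le \eps^4+4\eps^2+16\eps^2(1+\eps)^2 ,
\]
that is, $|AB|\cdot|CD|\le C\eps$ with $C\approx\sqrt{20}$ for small $\eps$ (and up to $\sqrt{69}$). That is not the claimed $<3\eps$, and the refinements you list do not close the gap.

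The bottleneck is the last term $(2ad\,u)(2bc\,u)$. You bound the two slacks $s_1=a+d-|AD|$ and $s_2=b+c-|BC|$ by $2\eps$ separately, which costs a factor of $4$. Your first refinement ($|AD|\le 1$) only turns $16\eps^2(1+\eps)^2$ into $4\eps^2(2+\eps)^2$, so this term alone still exceeds $9\eps^2$. No adjustment of the cross term can then give the constant $3$. The second refinement (the case split) is too vague to assess, and the fallback of optimising directly is not an argument.

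The missing observation is that the two slacks share one budget:
\[
s_1+s_2=(|AC|+|BD|)-(|AD|+|BC|)\le 2-2(1-\eps)=2\eps,
\]
so $s_1s_2\le\eps^2$. Three further facts finish the estimate:
\begin{itemize}
\item Since $|AD|\le a+d$, we have $2ad\,u=s_1(a+d+|AD|)\le 2s_1(a+d)$, and likewise $2bc\,u\le 2s_2(b+c)$.
\item Since $(a+d)+(b+c)=|AC|+|BD|\le 2$, AM--GM gives $(a+d)(b+c)\le 1$.
\item Combining these, $(2ad\,u)(2bc\,u)\le 4s_1s_2\le 4\eps^2$.
\end{itemize}
Keeping your own cross-term bound of $4\eps^2$, this gives
\[
|AB|^2|CD|^2\le \eps^4+8\eps^2<9\eps^2 \quad\text{for } 0<\eps<1,
\]
which is exactly the statement. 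You can also bound the cross terms by $2\eps^2$, using $2ab\,u\le |AB|^2\le 1$ and $2cd\,u\le|CD|^2\le 1$; this improves the constant to $\sqrt{7}$.

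With this fix your route is genuinely different from the paper's. The paper first disposes of the cases $|AB|<3\eps$ or $|CD|<3\eps$. It then rotates $C$ about $B$ and $D$ about $A$ until both diagonals have length $1$. Finally it works in coordinates with $A=(-h,0)$, $B=(h,0)$ and applies the mean value theorem to $f(t)=\sqrt{1-(t+h)^2}$ to get $|CD|\le 3\eps/|AB|$. Your diagonal-intersection argument avoids that normalization step and its monotonicity claims, and it makes the role of the diameter constraint transparent.

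For the paper's later application any $O(\eps)$ bound would suffice after adjusting the constants in Lemma~\ref{lem:tails}. But as a proof of the lemma as stated, the coupling of the two slacks is the idea you are missing.
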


\begin{proof}
     If either $|AB|< 3\eps$ or $|CD|< 3\eps$ then we are done. First, we claim that we can assume that $|AC|=|BD|=1$. Indeed, consider what happens if we rotate the point $C$ clockwise around $B$. Since $ABCD$ is convex, both $AC$ and $CD$ increase, and the quadrilateral remains convex until $C$ reaches the line $AB$, see Figure \ref{fig:rotate}. As $|AB|\ge 3\eps$, $|AB|+|BC|>1$ i.e., the length of $AC$ reaches $1$ before $C$ reaches the $AB$ line. Hence, we can assume that $|AC|=1$. Similarly, we can rotate $D$ clockwise around $A$ until $|DB|=1$. (The rotation might increase the length of $CD$ above 1, but this will not cause any problems in the following argument.)

     \begin{figure}[!ht]
         \centering
         \includegraphics[width=0.3\linewidth]{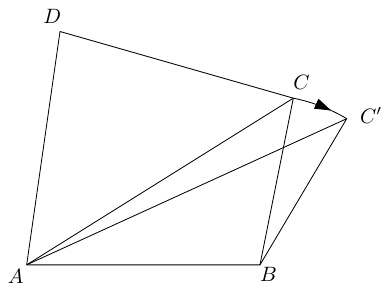}
         \caption{We can rotate $C$ around $B$ until $AC=1$, it only increases $|AC|$ and $|CD|$.}
         \label{fig:rotate}
     \end{figure}

     Suppose then $|AB|=2h$ with $3\varepsilon\leq 2h\leq 1$ and set a coordinate system in the plane such that $A=(-h,0)$ and $B=(h,0)$. Then $C$ and $D$ must lie on two circular arcs (see Figure \ref{fig:quadlemma}) and let $C=(c_x,c_y)$ and $D=(d_x,d_y)$ be their coordinates with $c_x,c_y,d_y\ge0$ and $d_x\le0$. We need to show that $|CD|<3\eps/2h$, or equivalently, that $(c_x-d_x)^2+(c_y-d_y)^2<9\eps^2\cdot(2h)^{-2}$. 

    \begin{figure}[!ht]
        \centering
        \includegraphics[width=0.4\linewidth]{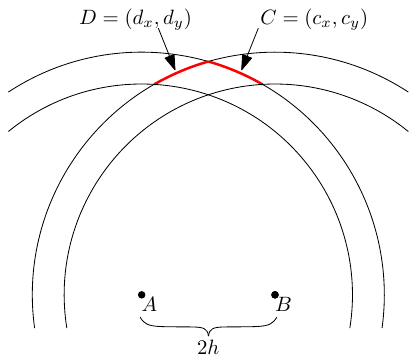}
        \caption{
        $C$ and $D$ must lie on the two small red arcs, respectively.}
        \label{fig:quadlemma}
    \end{figure}

   Since $|BC|\geq 1-\eps$ and $|AC|=1$, this translates to $(c_x-h)^2+c_y^2\geq (1-\eps)^2>1-2\eps$ and $(c_x+h)^2+c_y^2=1$, respectively. By subtracting the second inequality from the first we obtain $c_x\leq \eps/2h\leq 1/3$.  Analogously we get $-d_x\leq \eps/2h\leq 1/3$ by considering the point $D$. 

   To bound the $y$-axis coordinates $c_y$ and $d_y$ note that we can write $c_y=f(c_x)$ and $d_y=f(-d_x)$ for the map $f:[0,1/3]\to (0,1)$ given by $f(t):=\sqrt{1-(t+h)^2}$. By the mean value theorem,
   there is $z\in [0,1/3]$ such that $|c_y-d_y|=|c_x-(-d_x)|\cdot |f^\prime(z)|$, where $f^\prime:[0,1/3]\to(-\infty,0)$ is the derivative of $f$, given by $f^\prime(t):=-(t+h)/ \sqrt{1-(t+h)^2}$. It is not hard to see that the function $-f^\prime$ is increasing on $[0,1/3]$, hence $|f^\prime(z)|\leq |f^\prime(1/3)|\leq 5/\sqrt{11}<\sqrt{5}$. It follows that $(c_y-d_y)^2< 5(c_x+d_x)^2$.
    Using that $0<c_x\leq \eps/2h$ and $0<-d_x\leq \eps/2h$ we obtain $(c_x-d_x)^2<\eps^2/h^2$ and $(c_y-d_y)^2< 5(c_x+d_x)^2\le 5\eps^2/4h^2$. 
    
    It follows that $|CD|^2\le9\eps^2/4h^2$, which implies $|CD|\le 3\eps/2h=3\eps/|AB|$.
\end{proof}

We note that the sharp bound in Lemma \ref{quadrilateral} seems to be $|AB|\cdot|CD|\le 2\varepsilon-\eps^2$, however, any estimate of order $O(\eps)$ suffices for our purposes; therefore, we provided a weaker bound with a simpler proof.

Our next lemma  helps us identify, from a geometric perspective, the center of a box with its corresponding vertex in the box graph.   

\begin{lem}\label{box-point}
    Let $G:=G_X(\varepsilon_1,\varepsilon_2)$ be an $N$-vertex box graph associated to a finite set $X\subset \mathbb{R}^2$ of diameter at most $1$ and let $B_i$ and $B_j$ be the two boxes corresponding to vertices $i\sim j$ in $G$. Then $1-\eps_1-\eps_2< |P_iP_j|\leq 1$, where $P_i$ and $P_j$ are the centers of the boxes $B_i$ and $B_j$, respectively. 
\end{lem}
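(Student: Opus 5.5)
The plan is to unpack the definition of adjacency in the box graph and then apply the triangle inequality twice: once for the lower bound, and once (via convexity) for the upper bound. Neither step requires any of the earlier spectral or geometric machinery.

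\textbf{Lower bound.} Since $i\sim j$ in $G_X(\eps_1,\eps_2)$, by definition there are points $x\in X\cap B_i$ and $y\in X\cap B_j$ with $\|x-y\|\ge 1-\eps_2$. Each box is a square of side length $\eps_1/2$, so every point of $B_i$ lies within half a diagonal of its center. That is, $\|x-P_i\|\le \tfrac{\sqrt2}{2}\cdot\tfrac{\eps_1}{2}=\eps_1/(2\sqrt2)$, and similarly $\|y-P_j\|\le \eps_1/(2\sqrt2)$. The triangle inequality then gives
\[
|P_iP_j|\ \ge\ \|x-y\|-\|x-P_i\|-\|y-P_j\|\ \ge\ 1-\eps_2-\frac{\eps_1}{\sqrt2}\ >\ 1-\eps_1-\eps_2,
\]
where the last inequality is strict because $\eps_1>0$.

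\textbf{Upper bound.} By construction, the centers $P_i,P_j$ were chosen inside $K_{\eps_2}\subseteq K=\text{conv}(X)$. The convex hull of a set has the same diameter as the set itself, because the distance function is convex and so attains its maximum over $K\times K$ at pairs of points of $X$. Since $X$ has diameter at most $1$, it follows that $|P_iP_j|\le 1$.

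\textbf{Main point of care.} I expect no real obstacle here. The only place to be careful is the upper bound: it relies on the centers lying in $K$, and not merely the boxes meeting $K$. This is guaranteed by the way the covering was chosen in Section~\ref{sec:reduction}, and I would point this out explicitly in the write-up.
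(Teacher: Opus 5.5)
Your proof is correct and follows the paper's argument: the triangle inequality through the two witnessing points gives the lower bound, and the fact that the centers lie in $\text{conv}(X)$ gives the upper bound. The only difference is that you use the exact half-diagonal $\eps_1/(2\sqrt2)$ where the paper uses the cruder bound $\eps_1/2$, which does not affect the conclusion.
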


\begin{proof}
    Suppose the points $X_i$ and $X_j$ in $X$ certify that $i\sim j$, i.e. that $|X_iX_j|\geq 1-\eps_2$. We also know that $|P_iX_i|<\eps_1/2$ and $|P_jX_j|<\eps_1/2$, therefore by the triangle inequality we obtain that $|P_iP_j|>|X_iX_j|-|X_iP_i|-|X_jP_j|>1-\eps_2-\eps_1/2-\eps_1/2=1-\eps_1-\eps_2$. Furthermore, $P_i$ and $P_j$ lie inside $\text{conv}(X)$, hence $|P_iP_j|\leq 1$, finishing the proof.
\end{proof}

We are now ready to derive the key tail estimate on the local edge density of the box graph around a specific vertex.

\begin{lem}\label{lem:tails}
There is a universal constant $c_0>0$ for which the following statement holds. 
Let $0<\eps_1\leq \eps_2<1$ and let $G:=G_X(\eps_1,\eps_2)$ be an $N$-vertex box graph associated to a finite set $X\subset \mathbb{R}^2$ of diameter at most $1$. Then, for every vertex $k\in [N]$ and integer $t\ge 1$, we have:
\[
|\{j\sim k: d_j\ge t\}|\le c_0(N/t)\cdot (\eps_2/\eps_1)^2.
\]
\end{lem}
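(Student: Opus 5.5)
The plan is a double count of the edges leaving the set $S:=\{j\sim k: d_j\ge t\}$, with Lemma~\ref{quadrilateral} doing the geometric work through Lemma~\ref{box-point}. Since every $j\in S$ has degree at least $t$, we have $t|S|\le e(S,[N])$, where $e(S,[N])$ is the number of pairs $(j,l)$ with $j\in S$ and $l\sim j$. So it suffices to prove
\[
e(S,[N])\le c_0\, N(\eps_2/\eps_1)^2\asymp \eps_2^3\eps_1^{-4},
\]
using $N=\Theta(\eps_2\eps_1^{-2})$.

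Three geometric inputs feed this. First, every box centre lies in $K_{\eps_2}$, a strip of width $\eps_2$ along a convex curve of bounded length. Hence a disc of radius $\rho\ge\eps_2$ meets $O(\rho\,\eps_2/\eps_1^2)$ box centres, using the $\eps_1/4$-separation of the centres. Second, Lemma~\ref{box-point} turns every edge $i\sim j$ of $G$ into a segment $P_iP_j$ of length in $(1-\eps,1]$ with $\eps:=\eps_1+\eps_2\le 2\eps_2$. All centres lie in $K$, which has diameter at most $1$. Third, fix $l$ and two of its neighbours $j,j'\in S$. The four centres $P_k,P_l,P_j,P_{j'}$ are arranged so that $P_kP_j$ and $P_lP_{j'}$ (or $P_kP_{j'}$ and $P_lP_j$) are opposite long sides of a convex quadrilateral. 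Lemma~\ref{quadrilateral} then gives
\[
|P_kP_l|\cdot|P_jP_{j'}|<6\eps_2 .
\]
Checking the convex, counterclockwise ordering will need some care: all four points are near $\partial K$, with $P_j,P_{j'}$ near-antipodal to both $P_k$ and $P_l$. This needs a short case check, and degenerate configurations are absorbed into constants.

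From this, if $|P_kP_l|\approx r$, then the $S$-neighbours of $l$ lie in a disc of radius $O(\eps_2/r)$ inside the $\eps_2$-strip. So $l$ has at most $O(1+\eps_2^2/(r\eps_1^2))$ neighbours in $S$. The number of box centres $l$ with $|P_kP_l|\in[r,2r]$ is $O(r\eps_2/\eps_1^2)$. Their product is $O(\eps_2^3\eps_1^{-4})$ at each dyadic scale $r\ge\eps_2$. The scales $r\le\eps_2$ are handled together: there are $O(\eps_2^2/\eps_1^2)$ such $l$, each with at most $|S|\le O(\eps_2^2/\eps_1^2)$ neighbours in $S$. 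This follows because all neighbours of $k$ lie in the intersection of an annulus of width $2\eps_2$ around $P_k$ with the $\eps_2$-strip, a region of length $O(\sqrt{\eps_2})$ at worst. So the correct bound here needs a tighter count, again obtained from Lemma~\ref{quadrilateral} applied to pairs of neighbours of $k$.

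The main obstacle is that the dyadic sum over $r\in[\eps_2,1]$ naively loses a factor $\log(1/\eps_2)$, which is exactly the loss in Korsky's bound. To remove it, I would use a second constraint. All $S$-neighbours of all $l$ lie near the fixed arc of $\partial K$ roughly antipodal to $P_k$, and the long segments $P_lP_j$ all cross the segments $P_kP_j$. Applying Lemma~\ref{quadrilateral} to a far $l$ and the near-antipodal point of $k$ should force the far $l$ (with $r\gg\eps_2$) to have very few $S$-neighbours, decaying like $\eps_2^2/(r^2\eps_1^2)$ rather than $\eps_2^2/(r\eps_1^2)$. With that decay the dyadic series becomes geometric and sums to $O(\eps_2^3\eps_1^{-4})$. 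If that fails, the fallback is to run the count through Cauchy--Schwarz on $\sum_l d_S(l)^2$, bounding codegrees of pairs $j,j'\in S$ via the same product inequality, and to look for the extra cancellation there.
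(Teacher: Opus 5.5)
\textbf{Gap: the double-counting reduction provably cannot give the lemma.} Your dyadic computation is correct as far as it goes. What it proves is $|S|\le C\,N(\eps_2/\eps_1)^2\log(1/\eps_2)/t$, and the logarithm cannot be removed along this route. The reason is that the intermediate target $e(S,[N])\le c_0N(\eps_2/\eps_1)^2$ is false.

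Put $S_s:=\{j\sim k: d_j\ge s\}$ and $M:=N(\eps_2/\eps_1)^2$. Then $e(S_t,[N])=\sum_{j\in S_t}d_j=t|S_t|+\sum_{s>t}|S_s|$. The double-Reuleaux configuration of Proposition~\ref{sharp-deg-tail} (take $X$ a dense finite subset of the region, with $2\eps_1\le\eps_2$) has vertices $k$ with $|S_s|\ge\lfloor M/(4s)\rfloor$ for every $s\ge 1$. Hence
$e(S_t,[N])\ge \frac{M}{4}\ln\frac{M}{4(t+1)}-\frac{M}{4}$.
This exceeds $c_0M$ for every $t$ up to a fixed, $c_0$-dependent fraction of $M$, however large $c_0$ is chosen. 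Since $M\to\infty$ as $\eps_1\to 0$, this includes values of $t$ well outside any trivially handled range.

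Your own argument caps each dyadic scale $r=|P_kP_l|$ at $O(M)$, and the total is $\sum_l d_S(l)=e(S,[N])$. So the two-step walks $k\to j\to l$ through $S$ genuinely spread over logarithmically many scales. The extra decay $\eps_2^2/(r^2\eps_1^2)$ you hope to extract from a second application of Lemma~\ref{quadrilateral} therefore cannot hold. A Cauchy--Schwarz refinement of the same double count inherits the same loss. This is the obstruction that limits the degree test vector to a $\sqrt{N\log N}$ eigenvalue bound.

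Separately, your claim that the neighbours of $k$ lie in a stretch of length $O(\sqrt{\eps_2})$ is false. At a corner of a Reuleaux triangle the whole opposite arc is at distance $1$, so $d_k=\Theta(N)$. For the scales $r\le\eps_2$, though, the trivial bound $|S|\le N$ already suffices.

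\textbf{Missing idea: an extremal choice instead of averaging.} You need $t$ and $|S|$ to appear as two side lengths of a single quadrilateral, rather than as a sum over scales. The paper argues by contradiction:
\begin{enumerate}
\item Assume $|S|>c_0M/t$. Order $S$ by angle around $P_k$ and let $m$ be the median element.
\item Since $d_m\ge t$, at least $(t-1)/2$ neighbours of $m$ lie on one side of the line $P_kP_m$; let $i$ be the angularly extreme one. On the other side lie at least $\lfloor(|S|-1)/2\rfloor$ elements of $S$; let $j$ be the extreme one.
\item By Lemma~\ref{box-point}, these two families lie in annular sectors of width $O(\eps_2)$ around $P_m$ and $P_k$ respectively. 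The $\eps_1/4$-separation of the box centres converts the counts into chord lengths: $|P_iP_k|\ge c\,t\eps_1^2/\eps_2$ and $|P_jP_m|\ge c\,|S|\eps_1^2/\eps_2$.
\item Both $P_iP_m$ and $P_kP_j$ have length greater than $1-2\eps_2$. Lemma~\ref{quadrilateral} then forces $t|S|\eps_1^4\eps_2^{-2}\le C\eps_2$, i.e.\ $|S|\le C'M/t$, a contradiction for large $c_0$.
\end{enumerate}
No summation over distance scales occurs, which is exactly why no logarithm appears.
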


\begin{proof}
First note that if $c_0$ is large enough, then the statement holds for $t\le 5$. Hence, we may assume that $t>5$. Let $N_t=\{j\sim k: d_j\ge t\}$. Since $N=\Theta(\varepsilon_2\ \varepsilon_1^{-2})$, we can fix a constant $c_0$ such that
$c_0>2^{21}\cdot \eps_1^{-2}\ \eps_2N^{-1}$.
Suppose by contradiction that $|N_t|> c_0(N/t)\cdot (\eps_2/\eps_1)^2$. By the choice of $c_0$, we have $|N_t|> (2^{21}/t)\cdot (\eps_2^3/\eps_1^4)$.

Recall that we have labeled the boxes $B_1,\dots, B_N$ and let $P_1,\dots, P_N$ denote the centers of the boxes, respectively.  Let $<_k$ be the ordering of the elements of $N_t$ in counterclockwise order from the viewpoint of $P_k$, that is, $i<_kj$ if $P_i$ is to the left of the directed line $P_kP_j$.

\begin{figure}[!ht]
    \centering
    \includegraphics[width=0.5\linewidth]{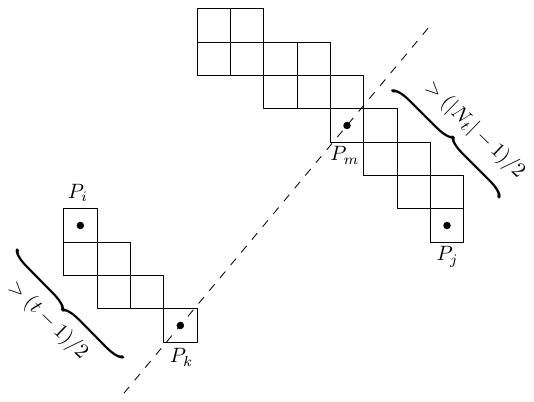}
    \caption{Finding four boxes, $B_k,B_m,B_i,B_j$, that are far from each other.}
    \label{fig:tail}
\end{figure}

 Pick a middle box from $N_t$ according to $<_k$ and label it $B_m$. Clearly, it has at least $t$ neighbors. At least $(t-1)/2$ of these neighbors  correspond to boxes that lie in the same half-plane determined by the line passing through $P_k$ and $P_m$.
 Let the box $B_i$ be the one among these that is farthest from $P_k$. By the choice of box $B_m$, at least $\lfloor (|N_t|-1)/2\rfloor$ of the neighbors of $k$ correspond to boxes that lie in the half-plane determined by the line $P_kP_m$ not containing $P_i$, let $B_j$ be the farthest box in $N_t$ on that side, see Figure \ref{fig:tail}. 
 
 We claim that the four points $P_i,P_k,P_j,P_m$ contradict Lemma \ref{quadrilateral} for the parameter $2\eps_2$. That is, we claim that $|P_iP_m|>1-2\eps_2$, $|P_kP_j|>1-2\eps_2$ but $|P_iP_k|\cdot |P_jP_m|\ge 6\eps_2$. Using that $\eps_1\le \eps_2$, the inequalities $|P_iP_m|>1-2\eps_2$ and $|P_kP_j|>1-2\eps_2$ follow from Lemma \ref{box-point}. Hence, it only remains to bound $|P_iP_k|$ and $|P_jP_m|$.

First consider $|P_iP_k|$. Let $S$ be the annulus  sector that is centered at $P_m$, has inner and outer radii $1-\eps_2$ and $1$, respectively, and is bounded by the lines $P_mP_i$ and $P_m 
P_k$. By the choice of $P_i$ we know that at least $(t-1)/2$ boxes intersect $S$. As $t>5$ we have $(t-1)/2\ge t/3$. Recall that each box has side length $\eps_1/2$ and that their centers are $\eps_1/4$ apart. This implies that we can fit at least $t/3$ disjoint boxes of side length $\eps_1/4$ inside the annulus sector $S'$ centered at the same point $P_m$ and bounded by the lines $P_mP_i$ and $P_mP_k$, but with inner and outer radii $1-\eps_2-\eps_1/4$ and $1+\eps_1/4$, respectively. Hence, the area of $S'$ is at least $\eps_1^2\cdot t/48$. 
On the other hand, if we let $\alpha$ denote the measure of $\angle P_iP_mP_k$ in radians, then the area of $S'$ is also given by $\big((1+\eps_1/4)^2-(1-\eps_2-\eps_1/4)^2\big)\cdot\alpha/2\leq \big((1+\eps_2/4)^2-(1-5\eps_2/4)^2\big)\cdot\alpha/2\leq 3\eps_2\cdot \alpha/2$. However, $|P_iP_k|=2\sin(\alpha/2)\geq \alpha/2$ since $\alpha\leq \pi$, so the area of $S'$ is at most $|P_iP_k|\cdot 3\eps_2$. It follows from here that  $\eps_1^2\cdot t/48\le|P_iP_k|\cdot 3 \eps_2$, which implies that $|P_iP_k|\ge\eps_1^2\eps_2^{-1}\cdot t\cdot2^{-9}$.

 A similar argument gives $|P_jP_m|\ge\eps_1^2\eps_2^{-1}\cdot |N_t|\cdot 2^{-9}$, hence $|P_iP_k|\cdot |P_jP_m|\ge\eps_1^4\eps_2^{-2}\cdot t\cdot|N_t|\cdot 2^{-18}$. Using that $|N_t|> (2^{21}/t)\cdot (\eps_2^3/\eps_1^4)$, we obtain $|P_iP_k|\cdot |P_jP_m|\ge 8 \eps_2>6\eps_2$, as desired. We conclude that our assumption is false and so $|N_t|\leq c_0N/t\cdot (\eps_2/\eps_1)^2$.
\end{proof}

With these tools in hand, we are now ready to bound the largest eigenvalue of the box graph.

\begin{prop}\label{prop:sqrt-control}
    There exists a universal constant $c_0>0$ such that for any  $0<\eps_1\leq \eps_2<1$ and for any finite set of points $X\subset \mathbb{R}^2$ of diameter at most $1$, and any box graph $G:=G_X(\eps_1,\eps_2)$ associated to it, the following spectral bound holds:$$\lambda_1\big(G_X(\eps_1,\eps_2)\big)\leq 6\cdot\eps_{2 } \eps_1^{-1}\sqrt{c_0|V(G_X(\eps_1,\eps_2))|}.$$ 
\end{prop}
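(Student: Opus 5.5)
We bound $\lambda_1(G)$ by Proposition \ref{vector-test-criterion} with the test vector $x_j:=\sqrt{d_j}$ for $j$ with $d_j\ge1$. Isolated vertices form zero rows and columns of the adjacency matrix and can be discarded, since this does not change $\lambda_1$. It then suffices to show that for every vertex $k$
\[
\sum_{j\sim k}\sqrt{d_j}\;\le\;6\,\frac{\eps_2}{\eps_1}\sqrt{c_0N}\cdot\sqrt{d_k},
\]
where $c_0$ is the constant of Lemma \ref{lem:tails} and $N=|V(G)|$.

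To estimate the left side, set $M:=c_0N(\eps_2/\eps_1)^2$. Lemma \ref{lem:tails} then reads $|\{j\sim k:d_j\ge t\}|\le M/t$ for every integer $t\ge1$. Trivially, this set also has at most $d_k$ elements. Sort the neighbours of $k$ by decreasing degree, $d_{j_1}\ge d_{j_2}\ge\dots\ge d_{j_{d_k}}$. The $r$-th largest degree $D_r:=d_{j_r}$ satisfies $r\le |\{j\sim k: d_j\ge D_r\}|\le M/D_r$, so $D_r\le M/r$. Hence
\[
\sum_{j\sim k}\sqrt{d_j}=\sum_{r=1}^{d_k}\sqrt{D_r}\le\sqrt{M}\sum_{r=1}^{d_k}r^{-1/2}\le 2\sqrt{M}\sqrt{d_k},
\]
using $\sum_{r\le n}r^{-1/2}\le 2\sqrt n$. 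The right side equals $2\frac{\eps_2}{\eps_1}\sqrt{c_0N}\sqrt{d_k}$, which is within the claimed constant $6$.

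Proposition \ref{vector-test-criterion} then gives $\lambda_1(G)\le 2\frac{\eps_2}{\eps_1}\sqrt{c_0N}\le 6\frac{\eps_2}{\eps_1}\sqrt{c_0N}$.

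Two points need care:
\begin{itemize}
\item Lemma \ref{lem:tails} applies only to integer $t$, but degrees are integers, so $t=D_r$ is admissible.
\item The test vector must be strictly positive, which is why isolated vertices are removed first. If the graph has no edges, $\lambda_1=0$ and the bound is trivial.
\end{itemize}

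The real difficulty is choosing the test vector. The vector $\mathbf 1$ only gives the trivial bound $\lambda_1\le N$. The weighting $\sqrt{d_j}$ is what lets the $1/t$ tail estimate from Lemma \ref{lem:tails} integrate to a $\sqrt{d_k}$-proportional sum. Once that choice is made, the rest follows directly from Lemma \ref{lem:tails}.
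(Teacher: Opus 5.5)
Your argument is correct and shares the paper's skeleton: the test vector $(\sqrt{d_j})_j$, Proposition~\ref{vector-test-criterion}, and Lemma~\ref{lem:tails} as the only geometric input.

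The difference is in how you bound $\sum_{j\sim k}\sqrt{d_j}$. The paper splits the neighbourhood into two kinds of pieces:
\begin{itemize}
\item a low-degree part $\{j\sim k: d_jd_k\le M\}$, bounded using that it has at most $d_k$ elements;
\item dyadic shells $\{j\sim k: 2^m\le d_j<2^{m+1}\}$, each controlled by Lemma~\ref{lem:tails} and then summed as a geometric series.
\end{itemize}
You instead sort the neighbours and read off $D_r\le M/r$. This reduces everything to $\sum_{r\le d_k}r^{-1/2}\le 2\sqrt{d_k}$.

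Your route is shorter and gives the constant $2$ with no bookkeeping. The paper's dyadic computation, as written, evaluates $\sum_{m\ge0}2^{-m/2}$ as $1+\sqrt2$ rather than $2+\sqrt2$. Its estimate therefore really yields the constant $5+2\sqrt2>6$. This is harmless only because $c_0$ is existentially quantified and Lemma~\ref{lem:tails} remains true with any larger constant.

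You also correctly discard isolated vertices so that the test vector is strictly positive, as Proposition~\ref{vector-test-criterion} requires. The paper passes over this point.
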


\begin{proof}
   Let $c_0$ be the constant given by Lemma \ref{lem:tails}.
   Let $V(G)=[N]$, $d_i=\text{deg}_{G_X}(i)$ for all $i\in[N]$ and let $\mathbf{r}:=(\sqrt{d_1},\sqrt{d_2},\ldots,\sqrt{d_N} )$. We claim that $(B\mathbf{r})_i\leq 6\eps_2\eps_1^{-1}\sqrt{c_0N\cdot d_i}$ for each $i\in[N]$, where $B$ is the adjacency matrix of $G$. This claim implies the desired result by Proposition \ref{vector-test-criterion}, finishing the proof.  To proceed, note that $(B\mathbf{r})_i=\sum_{j\sim i}\sqrt{d_j}$. To bound this sum, we will apply a standard dyadic interval decomposition combined with Lemma \ref{lem:tails}.
    
    For a fixed vertex $i$ we partition the neighborhood of $i$ into the following sets: first we place the neighbors of low degree into $A_0:=\{j\sim i:d_j\cdot d_i\leq c_0N\eps_2^{2\ }\eps_1^{-2}\}$; next, for each integer $k$ that ranges from $d:=\lfloor\log_2(c_0N\eps_2^{2\ }\eps_1^{-2}d_i^{-1})\rfloor$ to $\lfloor\log_2N\rfloor$, we set $A_k:=\{j\sim i:2^k\leq d_j<2^{k+1}\}$. From Lemma \ref{lem:tails} we can see that for such a $k$ the following holds: $$\sum_{j\in A_k}\sqrt{d_j}<\sqrt{2^{k+1}}\cdot|A_k|\leq \sqrt{2^{k+1}}\cdot|\{j\sim i:d_j\geq 2^k\}|\leq \sqrt{2^{k+1}}\cdot \dfrac{c_0N\eps_2^2}{2^{k\ }\eps_1^2}=\dfrac{c_0N\eps_2^2}{\eps_1^2\sqrt{2^{k-1}}}.$$

Moreover,  $\displaystyle\sum_{j\in A_0} \sqrt{d_j}\leq \eps_2\ \eps_1^{-1}\sqrt{c_0N/d_i}\cdot |A_0|\leq \eps_2\ \eps_1^{-1}\sqrt{c_0Nd_i}$, where we have used the trivial bound $|A_0|\leq d_i$. Putting all these together we get:
$$\sum_{j\sim i}\sqrt{d_j}\leq \dfrac{\eps_2}{\eps_1}\cdot\sqrt{c_0Nd_i}+\left(\dfrac{\eps_2}{\eps_1}\right)^2 \cdot\sum_{k=d}^{\lfloor \log_2N\rfloor}\dfrac{c_0N}{\sqrt{2^{k-1}}}\leq\dfrac{\eps_2}{\eps_1}\cdot \sqrt{c_0Nd_i} +\dfrac{c_0N\eps_2^2}{\eps_1^2\sqrt{2^{d-1}}}\cdot \sum_{k\geq 0}\left(\sqrt{2}\right)^{-k}.$$

However, $2^{d-1}>c_0N\eps_2^{2\ }\eps_1^{-2}(4d_i)^{-1}$ and $\sum_{k\geq 0}\alpha^k =(1-\alpha)^{-1}$ for each $0<\alpha<1$, therefore by plugging these in above we finally obtain the desired upper bound:
$$\sum_{j\sim i}\sqrt{d_j}\leq \dfrac{\eps_2}{\eps_1}\cdot \sqrt{c_0Nd_i}+\dfrac{2\eps_2}{\eps_1}\cdot\sqrt{c_0Nd_i}\cdot (1+\sqrt{2})\leq \dfrac{6\eps_2}{\eps_1}\cdot\sqrt{c_0N\cdot d_i}.$$
\end{proof}

Putting it all together, we obtain a proof of Theorem \ref{thm:main}.
\begin{proof}[Proof of Theorem \ref{thm:main}]
    As we have previously discussed, it is enough to prove the case ${\eps_1<\eps_2}$. If ${|\mathcal{N}_{\eps_1}(X)|/|\mathcal{A}_{\eps_2}(X)|\geq (\eps_1/\eps_2)^2}$ then we are done since  $(\eps_1/\eps_2)^2=\Omega(\eps_1^2\cdot \eps_2^{-3/2})$. On the other hand, if $|\mathcal{N}_{\eps_1}(X)|/|\mathcal{A}_{\eps_2}(X)|< (\eps_1/\eps_2)^2$, then by using Proposition \ref{prop:eigen} we get an absolute constant $c_0>0$ such that  $|\mathcal{N}_{\eps_1}(X)|/|\mathcal{A}_{\eps_2}(X)|\geq \left(4\lambda_1(G)\right)^{-1}$ for any $|X|\ge c_1\eps_1^{-2}$ and any box graph $G=G_X(\eps_1,\eps_2)$. Finally, Proposition \ref{prop:sqrt-control}, together with the fact that $|V(G)|=\Theta(\eps_2\ \eps_1^{-2})$, help us draw the conclusion that 
    $$\dfrac{|\mathcal{N}_{\eps_1}(X)|}{|\mathcal{A}_{\eps_2}(X)|}=\Omega\big(\lambda_1(G)^{-1}\big) =\Omega\left(\eps_1\eps_2^{-1}\sqrt{1/|V(G)|}\right)= \Omega\left( \eps_1^2\cdot \eps_2^{-3/2}\right).$$ 
\end{proof}


\section{Constructions}\label{sec:examples}

In this section, we provide several examples that show that the bounds from Theorem \ref{thm:main} are tight up to a constant factor. Moreover, we show that the key tail estimate from Lemma \ref{lem:tails}\linebreak is essentially sharp, thus capturing the extremal behavior of the box graph density.

\subsection{Annulus construction}
 A natural idea is to consider a set of points which are uniformly distributed near the boundary of a disk of radius $1/2$. Our first construction follows this idea, as $X$ will be a subset of an annulus with inner and outer radii $(1-\eps_2)/2$ and $1/2$, respectively. We simply place the points as concentric regular polygons, as in Figure \ref{fig:annulus}.

For convenience, given two integers $a\leq b$, we will denote by $\mathbb{Z}[a,b]$ the set $\{a,a+1,\ldots,b\}$, i.e. the set of integers ranging from $a$ to $b$. We will also write $\mathbb{Z}_n$ for the set of remainders modulo $n$, i.e., $\mathbb{Z}_n:=\mathbb{Z}[0,n-1]$ for any positive integer $n$.

\begin{figure}
    \centering
    \includegraphics[width=0.3\linewidth]{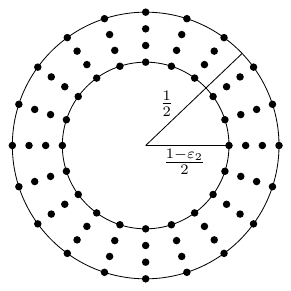}
    \caption{The annulus construction for $r=20$, $s=4$.}
    \label{fig:annulus}
\end{figure}

\begin{exam}[Annulus construction]
    Let $0<\delta<1$ and let $r,s\geq 2$ be integers. We define:
    $$AU(\delta,r,s):=\left\{ \left(\dfrac{1}{2}-\dfrac{\delta l}{2s}\right)\left( \cos\left(\frac{\pi k}{r}\right), \sin\left(\frac{\pi k}{r}\right)\right) : (k,l)\in \mathbb{Z}[-r,r-1]\times \mathbb{Z}_s\right\}.$$
\end{exam}

Our goal is to show the following estimate.

\begin{prop}\label{ref:annularconstr}
    Let $0<2\eps<\delta\leq 1/2$ and let $r,s\geq \eps^{-2}$ be integers. Then:
    \begin{align*}
     \left|\mathcal{A}_{\delta}\big(AU(\delta,r,s)\big)\right|=&\ \Theta\left(\sqrt{\delta}\cdot r^2s^2\right); \\
     \left|\mathcal{N}_{\eps}\big(AU(\delta,r,s)\big)\right|=&\ \Theta\left((\eps\cdot rs)^2/\delta\right).
\end{align*}
\end{prop}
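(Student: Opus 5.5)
The plan is to work in polar coordinates and count both types of pairs directly, point by point. Write each point of $AU(\delta,r,s)$ as $\rho(\cos\theta,\sin\theta)$ with $\rho=\tfrac12-a$, where $a=\delta l/(2s)\in[0,\delta/2)$ and $\theta=\pi k/r$. The set has $2rs$ points, it lies in the closed disk of radius $1/2$, and so its diameter is at most $1$. For two points, let $\varphi\in[0,\pi]$ denote their angular difference. The basic identity is
$$d^2=(\rho_1-\rho_2)^2+4\rho_1\rho_2\sin^2(\varphi/2).$$
The key observation is that the grid spacings are tiny compared with the scales that matter. The angular spacing is $\pi/r\le \pi\eps^2$ and the radial spacing is $\delta/(2s)\le \delta\eps^2/2$. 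Since $\delta>2\eps$, these are much smaller than both $\eps$ and $\sqrt\delta$. Consequently, whenever we count grid points in a "polar rectangle" whose angular side is $\gg 1/r$ and whose radial side is $\gg \delta/s$, the count equals the natural product $\Theta(r\cdot\text{angular width})\cdot\Theta((s/\delta)\cdot\text{radial width})$. I will use this observation repeatedly.

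\textbf{Antipodal pairs.} Set $\psi=\pi-\varphi$. The identity gives $d^2=(\rho_1+\rho_2)^2-4\rho_1\rho_2\cos^2(\varphi/2)=(\rho_1+\rho_2)^2-4\rho_1\rho_2\sin^2(\psi/2)$. Because $\rho_1,\rho_2\in[1/4,1/2]$ (using $\delta\le 1/2$), we get $d=1-a_1-a_2-\Theta(\psi^2)$ up to harmless constants. For the upper bound: the condition $d\ge 1-\delta$ forces $\psi\le C\sqrt\delta$. So for a fixed point, its antipodal partners lie among $O(\sqrt\delta\, r)$ angular positions and $s$ radii. Summing over all $2rs$ points gives $O(\sqrt\delta\,r^2s^2)$. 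For the lower bound: restrict to $a_1,a_2\le\delta/4$, which covers about half of the rings, and to $\psi\le c\sqrt\delta$ for a small constant $c$. Then $d\ge 1-\delta/2-O(c^2\delta)\ge1-\delta$. The number of admissible angular positions is $\Theta(c\sqrt\delta\,r)\ge1$, because $\sqrt\delta\, r\ge\sqrt{2\eps}\,\eps^{-2}$ is large. This gives $\Omega(\sqrt\delta\, r^2s^2)$.

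\textbf{Neighboring pairs.} For a point $p$, the set of $q$ with $\|p-q\|\le\eps$ lies in the disk $D(p,\eps)$. Since $\rho\ge1/4$, this disk is contained in a polar rectangle of angular width $O(\eps)$ and radial width $O(\eps)$. Intersecting with the annulus of width $<\delta/2$ only shrinks it. By the key observation, $p$ therefore has $O(\eps r\cdot \eps s/\delta)$ neighbors. For the lower bound I use the hypothesis $\delta/2>\eps$. The annulus is wider than $\eps$, so for each point $p$ I can choose a polar rectangle of angular width $\eps/4$ and radial width $\eps/4$. It must lie inside $D(p,\eps)$, sit within the annulus, and extend inward or outward from $p$ as room permits. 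This rectangle contains $\Omega(\eps r\cdot\eps s/\delta)$ grid points. Summing over the $2rs$ points and halving yields $\Theta((\eps rs)^2/\delta)$.

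The main obstacle is the lattice-counting step, namely making the product count rigorous uniformly. The rectangles have to be placed inside the annulus near its inner and outer boundaries, and the approximations $\sin x\approx x$ and $\rho\approx1/2$ have to be controlled with constants. I would handle this by choosing the polar rectangles to be unions of whole grid cells, with at least one full cell guaranteed by the spacing bounds. Then the upper and lower counts differ only by constant factors.
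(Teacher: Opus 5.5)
Your proposal is correct and follows essentially the same route as the paper: you count point by point with the polar law-of-cosines identity, trap the $\eps$-ball and the antipodal region between inner and outer annulus sectors, and count grid points in those sectors, with your spacing observation playing the role of the paper's Lemma~\ref{lem:annulus-count}. The only minor difference is in the antipodal lower bound: you restrict both points to the outer half of the rings, whereas the paper uses AM--GM to show that every point $Q$ has $\Theta(\sqrt{\delta}\,rs)$ antipodal partners $P$ in the outer half, and both versions give the same order.
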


 This shows that if $r$ and $s$ are large enough, then the threshold in Theorem \ref{thm:main} is sharp in the $2\eps_1\leq \eps_2$ regime. Indeed, for $2\eps_1\leq \eps_2\le 1/2$ and $X=AU(\eps_2,r,s)$, it gives

 $$
\dfrac{|\mathcal{N}_{\eps_1}(X)|}{|\mathcal{A}_{\eps_2}(X)|} = \frac{\Theta\left((\eps_1\cdot rs)^2/\eps_2\right)}{\Theta\left(\sqrt{\eps_2}\cdot r^2s^2\right)} = \Theta(  \eps_1^2\cdot \eps_2^{-3/2}).$$

Before proving Proposition \ref{ref:annularconstr}, we introduce a useful lemma for counting the points in an annulus sector.

\begin{dfn}
    Given $-1\leq \alpha_1<\alpha_2<1$ and $0\leq \delta_1\leq \delta_2<1$, we define an annulus sector associated to these parameters as follows:
    $$AS(\alpha_1,\alpha_2,\delta_1,\delta_2):=\left\{ \frac{1-t}{2}\cdot \big( \cos\left(\pi \alpha\right), \sin\left(\pi\alpha\right)\big) : (\alpha,t)\in [\alpha_1,\alpha_2]\times [\delta_1,\delta_2]\right\}.$$
\end{dfn}

\begin{lem}\label{lem:annulus-count}
        Let $-1\leq \alpha_1<\alpha_2<1$, $0\leq \delta_1\leq \delta_2\leq \delta<1$ and $r,s\geq 2$ be integers. Then: 
        $$\big|AU(\delta,r,s)\cap AS(\alpha_1,\alpha_2,\delta_1,\delta_2)\big|=\big(1+\lfloor \alpha_2r\rfloor-\lceil \alpha_1r\rceil\big)\cdot\big(1+\lfloor \delta_2s/\delta\rfloor-\lceil  \delta_1s/\delta\rceil\big).$$
\end{lem}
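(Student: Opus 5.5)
The plan is to exploit the product structure of $AU(\delta,r,s)$: it is the image of the grid $\mathbb{Z}[-r,r-1]\times\mathbb{Z}_s$ under the map $(k,l)\mapsto \bigl(\tfrac12-\tfrac{\delta l}{2s}\bigr)\bigl(\cos(\pi k/r),\sin(\pi k/r)\bigr)$. Likewise $AS(\alpha_1,\alpha_2,\delta_1,\delta_2)$ is the image of the rectangle $[\alpha_1,\alpha_2]\times[\delta_1,\delta_2]$ under the polar-type map $(\alpha,t)\mapsto \tfrac{1-t}{2}\bigl(\cos(\pi\alpha),\sin(\pi\alpha)\bigr)$. Writing a point of the construction in the second parametrization, we have $\alpha=k/r$ and $t=\delta l/s$. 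So the lattice point $(k,l)$ should lie in the sector exactly when $k/r\in[\alpha_1,\alpha_2]$ and $\delta l/s\in[\delta_1,\delta_2]$, and then the count factors as a product of two one-dimensional lattice counts.

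The key step is to justify that the correspondence is faithful, i.e. that membership in the image is equivalent to the parameters lying in the rectangle. I would argue as follows.

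\begin{enumerate}[(i)]
\item The radius $\tfrac{1-t}{2}$ is strictly positive for $t\in[0,1)$, and it is injective in $t$. So a point of the plane with positive radius determines $t$ uniquely through its norm.
\item The angle $\pi\alpha$ with $\alpha\in[-1,1)$ ranges over a half-open interval of length $2\pi$. So the direction of a nonzero point determines $\alpha\in[-1,1)$ uniquely.
\item Hence the polar map is injective on $[-1,1)\times[0,1)$. For $k\in\mathbb{Z}[-r,r-1]$ we have $k/r\in[-1,1)$, and for $l\in\mathbb{Z}_s$ we have $\delta l/s\in[0,\delta)\subset[0,1)$.
\item Therefore a point of $AU$ belongs to $AS$ if and only if $\alpha_1\le k/r\le\alpha_2$ and $\delta_1\le \delta l/s\le\delta_2$.
\end{enumerate}

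Step (iii) also shows that distinct $(k,l)$ give distinct points, so the number of points of $AU$ in $AS$ equals the number of admissible pairs $(k,l)$.

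It then remains to count each factor. For the angular part, the admissible $k$ are the integers with $\alpha_1 r\le k\le \alpha_2 r$. Since $-1\le\alpha_1$ and $\alpha_2<1$, these integers automatically lie in $\mathbb{Z}[-r,r-1]$, and there are $1+\lfloor\alpha_2 r\rfloor-\lceil\alpha_1 r\rceil$ of them.

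For the radial part, the admissible $l$ are the integers with $\delta_1 s/\delta\le l\le \delta_2 s/\delta$. The constraint $l\ge 0$ is automatic since $\delta_1\ge0$, and there are $1+\lfloor\delta_2 s/\delta\rfloor-\lceil\delta_1 s/\delta\rceil$ such integers. Here the condition $\delta_2\le\delta$ matters: it gives $l\le s$, but $\mathbb{Z}_s$ only allows $l\le s-1$.

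This is the main obstacle. In the edge case $\delta_2=\delta$, the value $l=s$ would be counted by the formula but does not correspond to a point of $AU$. In that case the formula is off by one, so I would read the lemma as stated with $\delta_2<\delta$, or as an approximate count, which is all that is needed for the $\Theta$ estimates in Proposition~\ref{ref:annularconstr}. The same caveat applies when the formula yields a nonpositive count: that happens exactly when the interval contains no integer, and then the count should be interpreted as $0$, with the formula returning $0$ precisely when the interval contains no integer.

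Multiplying the two independent counts gives the claimed product formula.
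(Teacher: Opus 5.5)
Your proof is correct and follows the same route as the paper's: you reduce membership of the point indexed by $(k,l)$ in $AS(\alpha_1,\alpha_2,\delta_1,\delta_2)$ to $\alpha_1 r\le k\le \alpha_2 r$ and $\delta_1 s/\delta\le l\le \delta_2 s/\delta$ and count the two integer ranges separately, and your injectivity argument on $[-1,1)\times[0,1)$ makes explicit a step the paper leaves implicit. Your $\delta_2=\delta$ caveat is a genuine flaw in the statement that the paper's proof misses, since it never imposes $l\le s-1$; there the radial factor overcounts by exactly one, which does not affect the $\Theta$-estimates the lemma is used for. Your second caveat is unnecessary, because $1+\lfloor b\rfloor-\lceil a\rceil\ge 0$ whenever $a\le b$, and it counts the integers in $[a,b]$ exactly, including when there are none.
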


\begin{proof}
 Consider the point $P\in AU(\delta,r,s)$ that has the form $(1/2-\delta l/2s)\big(\cos(\pi k/r),\sin(\pi k/r)\big)$ for some $(k,l)\in \mathbb{Z}[-r,r-1]\times \mathbb{Z}_s$. Then $P$ lies in $AS(\alpha_1,\alpha_2,\beta_1,\beta_2)$ if and only if its coordinates satisfy $\pi\alpha_1\leq \pi k/r\leq \pi\alpha_2$ and $(1-\delta_2)/2\leq 1/2-\delta l/2s\leq (1-\delta_1)/2$. However, after rearranging, this translates to $\alpha_1 r\leq k\leq \alpha_2 r$ and $\delta_1s/\delta\leq l\leq\delta_2s/\delta$, which implies the result. 
\end{proof}


\begin{proof}[Proof of Proposition \ref{ref:annularconstr}]
Set $X:= AU(\delta,r,s)$ and start with estimating $\mathcal{N}_\eps(X)$. We claim that for every point $Q\in X$ the ball of radius $\eps$ centered at $Q$ contains $\Theta\left(\eps^2rs/\delta\right)$ other points of $X$. Since each of these points forms an $\eps$-neighboring pair with $Q$, and $|X|=2rs$, the claim does imply that $|\mathcal{N}_\eps(X)|=\Theta\left(\eps^2r^2s^2/\delta\right)$.

Due to the circular symmetry of our construction, we can assume the point $Q$ has coordinates $\big((s-\delta q)/2s,0\big)$ for some $q\in \mathbb{Z}_s$. Let $(k,l)\in  \mathbb{Z}[-r,r-1] \times \mathbb{Z}_s$ and let $P\in AU(\delta,r,s)$ be a point with coordinates
$(1/2-\delta l/2s)\big(\cos(\pi k/r),\sin(\pi k/r)\big)$. Then the distance $|PQ|$ satisfies: 
    \begin{align*}
        |PQ|^2 &= \left(\dfrac{1}{2}-\dfrac{\delta q}{2s}-\cos\left(\dfrac{\pi k}{r}\right)\cdot \left(\dfrac{1}{2}-\dfrac{\delta l}{2s}\right)\right)^2 +\left(\dfrac{1}{2}-\dfrac{\delta l}{2s}\right)^2\cdot \sin^2\left(\dfrac{\pi k}{r}\right) \\
         &=  \left(\dfrac{1}{2}-\dfrac{\delta q}{2s}\right)^2 - 2\cos\left(\dfrac{\pi k}{r}\right)\cdot \left(\dfrac{1}{2}-\dfrac{\delta q}{2s}\right)\left(\dfrac{1}{2}-\dfrac{\delta l}{2s}\right) + \left(\dfrac{1}{2}-\dfrac{\delta l}{2s}\right)^2\\
         &= \dfrac{\delta^2}{4s^2}\cdot (q-l)^2 + 2\left(1-\cos\left(\dfrac{\pi k}{r}\right)\right)\cdot \left(\dfrac{1}{2}-\dfrac{\delta q}{2s}\right)\left(\dfrac{1}{2}-\dfrac{\delta l}{2s}\right).
    \end{align*}
   \indent Let us note that $\dfrac{1}{2}\geq 2\left(\dfrac{1}{2}-\dfrac{\delta q}{2s}\right)\left(\dfrac{1}{2}-\dfrac{\delta l}{2s}\right)\geq\dfrac{1}{2}(1-\delta)^2=\dfrac{1}{8}$ since $1-\delta\geq \dfrac{1}{2}$. Moreover, we know that $x^2/4\leq 1-\cos x\leq x^2/2$ whenever $|x|<2.7$, which will be useful later as well. 
 
 Consequently, if $|k|\geq 2\varepsilon r$, then $|PQ|^2\geq (1-\cos(\pi k/r))/8\geq 2(\pi k/8r)^2>\varepsilon^2$. Similarly, if $|q-l|>2\varepsilon s/\delta$, then $|PQ|^2>\varepsilon^2$. This means that the ball of radius $\eps$ centered at $Q$ is contained in $AS(-2\eps r,\ 2\eps r,\ \delta q/s-\eps,\ \delta q/s +\eps)$. 
 
  On the other hand, if both $|q-l|\leq 6\eps s/5\delta$ and $|k|\leq \eps r/2$ hold, then we deduce from the above that $|PQ|^2\leq 9\eps^2/25 +\pi^2k^2/(2r)^2\leq 9\eps^2/25+16\eps^2/25=\eps^2$, which means that the ball of radius $\eps$ centered at $Q$ contains $AS(-\eps r/2,\ \eps r/2,\ \delta q/s-3\eps/5,\ \delta q/s+3\eps/5)$. It follows by Lemma \ref{lem:annulus-count} that any point $Q\in X$ contributes with $\Theta\left(\eps^2rs/\delta\right)$ pairs to $\mathcal{N}_{\eps}(X)$.

We now move to $\mathcal{A}_\delta(X)$. Similarly as before, we claim that for every point $Q\in X$ there are $\Theta\left(\sqrt{\delta}\cdot rs\right)$ points in $X$ which are $\delta$-antipodal with $Q$. Again, as $|X|=2rs$, this claim implies that $|\mathcal{A}_\delta(X)|=\Theta\left(\sqrt{\delta}\cdot r^2s^2\right)$.

We may assume this time that $Q$ has coordinates $\big((\delta q-s)/2s,0\big)$ for some $q\in \mathbb{Z}_s$, i.e. it is situated on the negative part of the $x$-axis due to the symmetry. We let $(k,l)\in \mathbb{Z}[-r,r-1]\times \mathbb{Z}_s$ and let $P\in AU(\delta,r,s)$ be the point with coordinates
$(1/2-\delta l/2s)\big(\cos(\pi k/r),\sin(\pi k/r)\big)$. We estimate $|PQ|$ in a similar fashion: 
 \begin{align*}
        |PQ|^2 &= \left(\dfrac{1}{2}-\dfrac{\delta q}{2s}+\cos\left(\dfrac{\pi k}{r}\right)\cdot \left(\dfrac{1}{2}-\dfrac{\delta l}{2s}\right)\right)^2 +\left(\dfrac{1}{2}-\dfrac{\delta l}{2s}\right)^2\cdot \sin^2\left(\dfrac{\pi k}{r}\right) \\
         &=  \left(\dfrac{1}{2}-\dfrac{\delta q}{2s}\right)^2 + 2\cos\left(\dfrac{\pi k}{r}\right)\cdot \left(\dfrac{1}{2}-\dfrac{\delta q}{2s}\right)\left(\dfrac{1}{2}-\dfrac{\delta l}{2s}\right) + \left(\dfrac{1}{2}-\dfrac{\delta l}{2s}\right)^2\\
         &= \left(1-\delta\cdot \dfrac{q+l}{2s}\right)^2  - 2\left(1-\cos\left(\dfrac{\pi k}{r}\right)\right)\cdot \left(\dfrac{1}{2}-\dfrac{\delta q}{2s}\right)\left(\dfrac{1}{2}-\dfrac{\delta l}{2s}\right).
    \end{align*}
  \indent Recalling the useful bounds from above, we deduce that $|PQ|^2\leq 1-\big(1-\cos(\pi k/r)\big)/8\leq 1-\pi^2k^2/32r^2$. Therefore, if $|k|\geq 8r\sqrt{\delta}/\pi$, then $|PQ|^2< 1-2\delta<(1-\delta)^2$, which implies that all points of $X$ which form $\delta$-antipodal pairs with $Q$ must be in $AS(-8r\sqrt{\delta}/\pi,8r\sqrt{\delta}/\pi,0,\delta)$.  

 Finally, we claim that if $2l\leq s$ and $|k|\leq \sqrt{2\delta}\cdot r/\pi$, then $|PQ|^2\geq (1-\delta)^2$, which would imply that all points of $X$ from inside $AS(-\sqrt{2\delta}\cdot r/\pi,\sqrt{2\delta}\cdot r/\pi,0,\delta/2)$ are $\delta$-antipodal with $Q$. Consequently, Lemma \ref{lem:annulus-count} applied to these two annulus sectors would then give us that there are $\Theta\left(\sqrt{\delta}\cdot rs\right)$ such points, as desired.

 To prove this last claim, we have $\left(1-\delta\cdot \dfrac{q+l}{2s}\right)^2\ge 4\left(\dfrac{1}{2}-\dfrac{\delta q}{2s}\right)\left(\dfrac{1}{2}-\dfrac{\delta l}{2s}\right)$ by AM-GM,
 while also $2\big(1-\cos(\pi k/r)\big)\leq \pi^2k^2/r^2$. It follows that
 $$|PQ|^2\geq \left(4-\dfrac{\pi^2k^2}{r^2}\right)\left(\dfrac{1}{2}-\dfrac{\delta q}{2s}\right)\left(\dfrac{1}{2}-\dfrac{\delta l}{2s}\right)\geq \left(1-\dfrac{\pi^2k^2}{4r^2}\right)\left(1-\dfrac{\delta}{2}\right)(1-\delta).$$
  Therefore, it is enough to prove that $1-(\pi k/2r)^2\geq (1-\delta)(1-\delta/2)^{-1}=1-\delta(2-\delta)^{-1}$. But this is true since $(\pi k/2r)^2\leq (\sqrt{2\delta}/2)^2=\delta/2<\delta(2-\delta)^{-1}$, finishing the proof.
\end{proof}

\subsection{Reuleaux triangle constructions}

In this subsection we study an extremal construction which is based on the Reuleaux triangle. Suppose $P_0,P_1,P_2$ are points in $\mathbb{R}^2$ forming an equilateral triangle of side length $1$. We draw, for each $i\in \{0,1,2\}$, an arc between $P_i$ and $P_{i+1}$ centered at $P_{i+2}$, where the indices are understood modulo $3$. The resulting shape is called the \emph{Reuleaux triangle}. It has a constant width of 1 across all directions and a diameter of 1.

\begin{exam}[Reuleaux construction]
    Let $0<\eps<1$ and let points $P_0,P_1,P_2$ be the corners of a Reuleaux triangle of diameter $1$. For each $i\in \{0,1,2\}$, consider the points $A_i, B_i,C_i,D_i$ on the arc $P_{i+1}P_{i+2}$ such that $|A_iP_{i+1}|=|B_iP_{i+2}|=1/3$ and $|C_iP_{i+1}|=|D_iP_{i+2}|=\eps/2$, where all the indices are modulo $3$. We define the set $RT(\eps,x,y)$ in the following way: for each $i\in\{0,1,2\}$ we place $y$ equally spaced points along each of the arcs $A_iB_i$, then we place $x$ points inside the triangle $\mathcal{T}_i:=P_iC_{i-1}D_{i+1}$.
    (See Figure \ref{fig:reucon1})  
\end{exam}

For large enough values of $x$ and $y$, we can sharply determine $\mathcal{N}_{\eps_1}(X)$ and $\mathcal{A}_{\eps_2}(X)$ for the set $X:=RT(\eps_2,x,y)$ whenever $2\eps_1\geq \eps_2$, as illustrated by the next result.

\begin{prop}\label{2nd-example}
    Let $0<\eps_2\leq2\eps_1<3^{-3}$ and let $x,y$ be large enough integers. Then:
    \begin{align*}
     \left|\mathcal{A}_{\eps_2}\big(RT(\eps_2,x,y)\big)\right|=&\ 3x^2+3xy; \\
     \left|\mathcal{N}_{\eps_1}\big(RT(\eps_2,x,y)\big)\right|=&\ \Theta\big(\eps_1\cdot y^2+ x^2\big).
\end{align*}
\end{prop}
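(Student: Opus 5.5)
The plan is to verify both counts by sorting every pair of points of $X:=RT(\eps_2,x,y)$ according to which of the six pieces its endpoints lie in. The pieces are the three triangles $\mathcal{T}_i$ and the three arc pieces $A_iB_i$. Throughout I use only elementary distance estimates inside the Reuleaux triangle $\mathcal{R}$, which has diameter $1$, so every pair has distance at most $1$.

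\textbf{The antipodal count.} The goal is to show that the $\eps_2$-antipodal pairs are exactly of two kinds.
\begin{enumerate}[(i)]
\item A point of $\mathcal{T}_i$ paired with a point of $\mathcal{T}_j$, $i\neq j$. There are $3$ unordered pairs of triangles with $x^2$ pairs each, giving $3x^2$.
\item A point of $\mathcal{T}_i$ paired with a point of the opposite arc piece $A_iB_i$. This gives $3xy$.
\end{enumerate}
For (i), the vertices $C_{i-1}$ and $D_{i+1}$ lie at distance $\eps_2/2$ from $P_i$, so $\mathcal{T}_i$ lies in the disk of radius $\eps_2/2$ about $P_i$. Hence two points in distinct triangles are at distance at least $|P_iP_j|-\eps_2=1-\eps_2$. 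For (ii), the arc $P_{i+1}P_{i+2}$ is centred at $P_i$ with radius $1$, so any point of $\mathcal{T}_i$ is at distance at least $1-\eps_2/2$ from it.

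It remains to exclude every other type of pair, and I will show each such distance is at most $1-\eps_2$.
\begin{itemize}
\item Two points in the same triangle are at distance at most $\eps_2$.
\item For a point of $\mathcal{T}_i$ and a point of $A_jB_j$ with $j\neq i$, note that $P_i$ is an endpoint of the arc $P_{j+1}P_{j+2}$. Points of $A_jB_j$ have chord distance to each endpoint between $1/3$ and roughly $2/3$. So the distance is at most about $2/3+\eps_2/2<1-\eps_2$.
\item Two points on the same arc piece are at distance at most the chord $|A_iB_i|<1/2$.
\end{itemize}

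\textbf{The main obstacle.} The key step is to show that two points $Q\in A_iB_i$ and $R\in A_jB_j$ with $i\neq j$ are at distance $<1-\eps_2$. I will first show that the points of $\mathcal{R}$ at distance $\ge 1-\eps_2$ from $Q$ are confined to a small neighbourhood of $P_i$, of size $O(\eps_2)$ or at worst $O(\sqrt{\eps_2})$. The reason is that $Q$ sits well inside the arc (chord distance $\ge 1/3$ from its endpoints), and $\mathcal{R}$ has a $120^\circ$ corner at $P_i$. Concretely, I would parametrize $R$ on the arc $P_{i+1}P_i$ by its angle about the centre $P_{i+2}$ and write $|QR|^2$ explicitly. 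If $R$ is at chord distance $\ge 1/3$ from $P_i$, this should give $|QR|^2\le 1-c$ for an absolute $c$ comfortably above $3\cdot 3^{-3}$, which yields $|QR|<1-\eps_2$ since $\eps_2<3^{-3}$. Checking this uniformly over both endpoints' positions is the step I expect to need the most care. Alternatively, one can apply Lemma \ref{quadrilateral} to the quadrilateral formed by $Q$, $R$ and two vertices of $\mathcal{R}$ to force one of $|QP_i|$ or $|RP_i|$ to be $O(\eps_2)$, which is false. Since no distance lands exactly at the threshold in the borderline configurations, the count is exact: $3x^2+3xy$.

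\textbf{The neighbouring count.} Since $\eps_2\le 2\eps_1$, each triangle has diameter $\le\eps_2/2\cdot 2\le 2\eps_1$. That is not immediately $\le\eps_1$, so I would instead argue with the actual shape. $\mathcal{T}_i$ is a thin triangle whose sides from $P_i$ have length $\eps_2/2\le\eps_1$, and the third side $C_{i-1}D_{i+1}$ also has length $\le\eps_2/2$ because the corner angle is $2\pi/3$ minus a tiny curvature correction, so its diameter is $\le\eps_2/2\le\eps_1$. Thus all $3\binom{x}{2}=\Theta(x^2)$ intra-triangle pairs are neighbouring. On each arc piece, $y$ equally spaced points lie on an arc of length $\Theta(1)$, so for large $y$ each point has $\Theta(\eps_1 y)$ others within distance $\eps_1$; this gives $\Theta(\eps_1y^2)$ pairs. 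All cross-piece distances are bounded below by an absolute constant (roughly $1/3-\eps_2$), which exceeds $\eps_1<3^{-3}/2$, so no further pairs occur. Summing gives $\Theta(\eps_1y^2+x^2)$.
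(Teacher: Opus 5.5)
Your antipodal count is sound and follows the paper: triangle inequalities give the $3x^2+3xy$ included pairs, and Lemma~\ref{quadrilateral} excludes arc--arc pairs. Of your two routes for the arc--arc step, only the lemma route is actually carried out, and its conclusion should be stated correctly. Take $Q\in A_iB_i$, $R\in A_jB_j$, and the two corners of the Reuleaux triangle other than the one shared by their arcs. If $|QR|\ge 1-\eps_2$, the lemma says the product of the two remaining sides is $<3\eps_2$. Both of those sides are $\ge 1/3$, so the product is at least $1/9>3\eps_2$, a contradiction. The lemma does not force a side to be $O(\eps_2)$. Applying it to arbitrary $Q,R$ is slightly cleaner than the paper, which applies it only to $A_1B_2$ and asserts without proof that this is the longest arc-to-arc distance.

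The genuine error is in the neighbouring count, where you claim $\mathcal{T}_i$ has diameter $\le\eps_2/2$. $\mathcal{T}_i$ is isosceles with legs $|P_iC_{i-1}|=|P_iD_{i+1}|=\eps_2/2$ and apex angle $2\pi/3-2\arcsin(\eps_2/4)$. Hence
$|C_{i-1}D_{i+1}|=\eps_2\sin\bigl(\pi/3-\arcsin(\eps_2/4)\bigr)\approx(\sqrt3/2)\,\eps_2$. The base of an isosceles triangle is no longer than its legs only when the apex angle is at most $\pi/3$. An apex angle near $2\pi/3$ makes the base about $\sqrt3$ times the leg, so it is longer, not shorter.

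Consequently, for $\eps_2$ near $2\eps_1$ the triangle has diameter about $\sqrt3\,\eps_1>\eps_1$. If the $x$ points are split into two clusters near $C_{i-1}$ and $D_{i+1}$, about half the intra-triangle pairs are not $\eps_1$-neighbouring. So "all $3\binom{x}{2}$ intra-triangle pairs count" is false in part of the allowed range.

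The $\Theta(x^2)$ term still holds. $\mathcal{T}_i$ lies in the disk of radius $\eps_2/2\le\eps_1$ about $P_i$. That disk is covered by $9$ squares of side $2\eps_1/3$, each of diameter $(2\sqrt2/3)\eps_1<\eps_1$. Assign each point to one square. By Cauchy--Schwarz, as in Proposition~\ref{prop:many-close-pairs}, each triangle contributes at least $x^2/18-x/2=\Omega(x^2)$ neighbouring pairs, and trivially at most $\binom{x}{2}$.

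The paper's proof makes the same unqualified assertion ("any two points inside the same $\mathcal{T}_i$ are $\eps_1$-neighboring"), so this pigeonhole step is needed there as well. With it, your argument proves the full statement.
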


Setting $y:=tx$ for some parameter $t\geq 1$ we get $\left|\mathcal{N}_{\eps_1}\big(RT(\eps_2,x,y)\big)\right|=\Theta\left(x^2(1+t^2\eps_1)\right)$ and $\left|\mathcal{A}_{\eps_2}\big(RT(\eps_2,x,y)\big)\right|=\Theta (tx^2)$ above. By picking $t=\Theta\left(\eps_1^{-1/2}\right)$, it is now easy to observe that we achieve the $\sqrt{\eps_1}$ threshold from Theorem \ref{thm:main} for the ratio $|\mathcal{N}_{\eps_1}(X)|/|\mathcal{A}_{\eps_2}(X)|$.

\begin{proof}[Proof of Proposition \ref{2nd-example}.]
    Set $X:=RT(\eps_2,x,y)$ and  start with $\mathcal{A}_{\eps_2}(X)$. We claim that all pairs from $\mathcal{A}_{\eps_2}(X)$ must have one of their points in some $\mathcal{T}_i$. Indeed, notice that the longest distance that can occur between two points of $X$ lying on the three arcs $A_iB_i$ is given by $|A_iB_{i+1}|$. These three segments are congruent, so let us consider $|A_1B_2|$. For the convex quadrilateral $A_1B_2P_1P_2$, we have $|A_1P_2|\cdot |B_2P_1|=3^{-2}>3\eps_2$, hence we infer from Lemma \ref{quadrilateral} that $|A_1B_2|<1-\eps_2$.

\begin{figure}[!ht]
    \centering
    \includegraphics[width=0.6\linewidth]{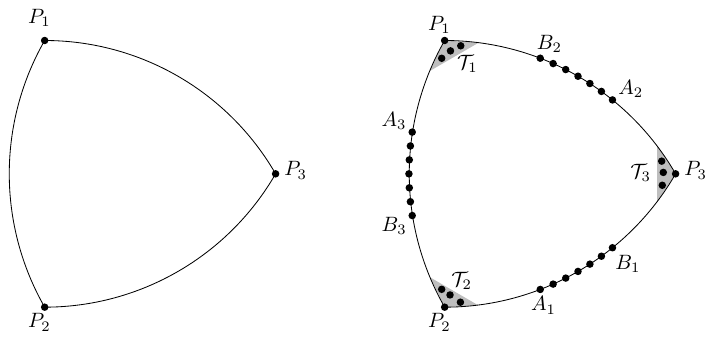}
    \caption{The Reuleaux triangle and the first construction based on it.}
    \label{fig:reucon1}
\end{figure}
    
    Next, let us count how many $\eps_2$-antipodal pairs a point $Q\in X\cap \mathcal{T}_1$ forms in $X$. 
    We start by noting that the longest distance from $Q$ to any point of $X$ lying on one of the arcs $A_2B_2$ and $A_3B_3$ is given by $|QA_{2}|$ and $|QB_{3}|$, respectively. However, the angle $\angle QA_{2}P_{3}$ is obtuse, hence $|QA_{2}|^2<|QP_{3}|^2-|A_{2}P_{3}|^2\leq 1-3\eps_2\leq 1-2\eps_2-\eps_2^2$, which gives $|QA_{2}|<1-\eps_2$. Similarly, 
    $|QB_{3}|<1-\eps_2$, implying that only the points from inside $\mathcal{T}_{2},\mathcal{T}_{3}$ and those on the arc $A_1B_1$ form $\eps_2$-antipodal pairs with $Q$. We claim that all of them do. Indeed, for any point $C$ on the arc $A_1B_1$ we have $|CQ|\geq|P_1C|-|P_1Q|\geq 1-\eps_2/2$, while for any point $D$ inside $\mathcal{T}_j$ with $j\neq 1$ we have $|QD|\geq |P_iP_j|-|P_iQ|-|P_jD|\geq 1-\eps_2$. Hence $Q$ appears in $2x+y$ antipodal pairs. The same argument works for $Q\in X\cap \big(\mathcal{T}_2\cup \mathcal{T}_3\big)$. In total we obtain $|\mathcal{A}_{\eps_2}(X)|=3x^2+3xy$.   
    
   Finally, we shift our focus to $\mathcal{N}_{\eps_1}(X)$. Clearly, if two points are $\eps_1$-neighboring, then they must lie inside the same triangle $\mathcal{T}_i$ or on the same arc $A_iB_i$. Note that if $y$ is large enough, then each point from one of the arcs $A_iB_i$ forms $\Theta(\eps_1y)$ $\eps_1$-neighboring pairs within $X$. Hence $\Theta(\eps_1y^2)$ neighboring pairs come from the arcs. Moreover, any two points inside the same $\mathcal{T}_i$ are $\eps_1$-neighboring and thus $3x(x-1)/2$ other $\eps_1$-neighboring pairs get added to $\mathcal{N}_{\eps_1}(X)$. This yields the desired bounds on $|\mathcal{N}_{\eps_1}(X)|$, finishing the proof.
\end{proof}

\medskip

We conclude this subsection with another construction based on the Reuleaux triangle. Its purpose is to show that the degree bound for the box graph from Lemma~\ref{lem:tails}, which is a central ingredient to the proof, is tight up to a constant factor.

\begin{exam}\label{exa:sharptail}

    Let $0<\delta<1$ and let points $P_0,P_1,P_2$ be the corners of an equilateral triangle of side length $1$ whose center is $O$.
    Consider the points $Q_0,Q_1,Q_2$ in the plane such that $Q_i$ lies on the segment $OP_i$ such that $|P_iQ_i|=\delta/\sqrt{3}$, thus $Q_0,Q_1,Q_2$ form an equilateral triangle of side length $1-\delta$. Draw the two Reuleaux triangles determined by the triples $\{P_i\}$ and $\{Q_i\}$ and define the set $RA(\delta)$ to be the region between the two Reuleaux triangles (see Figure \ref{fig:reucon2}).
\end{exam}

\begin{figure}[!ht]
    \centering
    \includegraphics[width=0.3\linewidth]{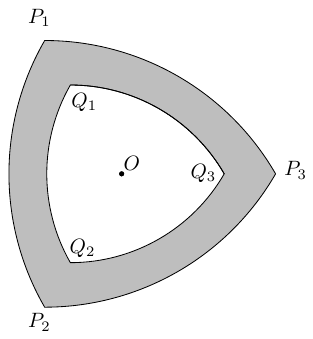}
    \caption{The region $RA(\delta)$.}
    \label{fig:reucon2}
\end{figure}

\begin{prop}\label{sharp-deg-tail}
    For all $0<2\varepsilon_1\leq \varepsilon_2<0.5$ there is a covering of the region $RA\big(\eps_2/\sqrt{3}\big)$ whose associated box graph $G:=G(\eps_1,\eps_2)$  has at least $2\eps_2/\eps_1$ vertices $v$ which, for any positive integer $t$, satisfy the following degree bound:
    $$\big|\{w\in V(G): w\sim v \text{ in } G \text{ and deg}(w)\geq t \}\big|\geq \left\lfloor \left (\dfrac{\eps_2}{\eps_1}\right)^2 \cdot \dfrac{|V(G)|}{4t}\right\rfloor\ .$$
\end{prop}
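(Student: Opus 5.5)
We plan to exhibit the covering explicitly and compute the relevant degrees by hand, using the three-fold symmetry of $RA(\delta)$ with $\delta=\eps_2/\sqrt3$. First we record the basic geometry. We have $|P_iQ_i|=\delta/\sqrt3=\eps_2/3$, so the arc of the outer Reuleaux triangle opposite $P_0$ (centered at $P_0$, radius $1$) and the corresponding inner arc (centered at $Q_0$, radius $1-\delta$) bound a curved band of width $\Theta(\eps_2)$. Every point of this band has distance from $P_0$ in the interval
\[
[\,1-\delta-\eps_2/3,\ 1\,]\subseteq[\,1-0.91\eps_2,\ 1\,].
\]
The region $RA(\delta)$ therefore has area $\Theta(\eps_2)$. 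We cover it by a grid of squares of side $\eps_1/2$ with centers in the region and pairwise at distance at least $\eps_1/4$, say the axis-parallel grid of mesh $\eps_1/4$ restricted to $RA(\delta)$. This gives $|V(G)|=\Theta(\eps_2\eps_1^{-2})$, and we fill each box with points of the region so that every pair of boxes realising a distance $\ge 1-\eps_2$ actually becomes an edge.

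The candidate vertices $v$ are the boxes near the corner $P_0$. Consider the portion of $RA(\delta)$ within distance roughly $\eps_2/20$ of $P_0$ along the two bands meeting there. This portion is a region of width $\Theta(\eps_2)$, so it contains at least $2\eps_2/\eps_1$ boxes once $2\eps_1\le\eps_2$; the constant $1/20$ is to be adjusted. For such a box $v$, every point of $v$ is within $\eps_2/20+\eps_1$ of $P_0$. By the estimate above, every point of the opposite band is then at distance at least
\[
1-0.91\eps_2-\eps_2/20-\eps_1/2\;\ge\;1-\eps_2
\]
from some point of $v$, after fixing constants. Hence $v$ is adjacent to every box meeting the opposite band. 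It then suffices to show that the boxes $w$ in the opposite band satisfy
\[
\big|\{w \text{ in the opposite band} : \deg(w)\ge t\}\big|\;\ge\;\Big\lfloor (\eps_2/\eps_1)^2\,|V(G)|/(4t)\Big\rfloor .
\]

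The heart of the proof is a lower bound on $\deg(w)$ for $w$ in the band opposite $P_0$. Parametrize $w$ by its angular distance $\varphi$ (as seen from $P_0$) from the endpoint $P_1$. The set of points at distance $\ge1-\eps_2$ from $w$ is the outer part of the disc of radius $1$ around $w$, i.e.\ a strip of width $\approx\eps_2$ along a circle of radius $\approx 1$ centered at $w$. When $w=P_1$ this circle is exactly the arc $P_2P_0$ centered at $P_1$, so the strip contains the whole band along that arc. For $w$ at angle $\varphi$, the circle meets the band along $P_2P_0$ near $P_0$ at an angle of order $\varphi$. Consequently the intersection of the strip with $RA(\delta)$ contains a curved rectangle of width $\Theta(\eps_2)$ and length $\Theta(\min\{1,\eps_2/\varphi\})$. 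This gives
\[
\deg(w)\;\ge\; c\,\frac{\eps_2}{\eps_1^{2}}\cdot\min\Big\{1,\frac{\eps_2}{\varphi}\Big\}.
\]
Thus $\deg(w)\ge t$ holds whenever $\varphi\le c\,\eps_2^2/(\eps_1^2 t)$. The boxes with such $\varphi$ fill a piece of the band of length $\Theta(\eps_2^2/(\eps_1^2t))$ and width $\Theta(\eps_2)$, so there are $\Theta(\eps_2^3\eps_1^{-4}t^{-1})=\Theta\big((\eps_2/\eps_1)^2|V(G)|/t\big)$ of them, which is the required count up to the constant. For $t$ so large that this quantity is below $1$, the floor makes the statement trivial. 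For $t$ so small that the threshold on $\varphi$ exceeds the length of the arc, we simply use all $\Theta(\eps_2/\eps_1^2)$ boxes of that third of the band.

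The main obstacle will be making this degree estimate rigorous with explicit constants, strong enough to produce the factor $1/4$. This requires a careful second-order expansion of the distance from $w$ to points of the band near $P_0$, together with the fact that both boundary arcs of the band (centered at $P_1$ and at $Q_1$) stay within $\eps_2$ of the circle of radius $1$ around $w$ over a length of about $\eps_2/\varphi$. I would first do the computation for $w$ on the outer arc exactly, writing the band near $P_0$ in polar coordinates around $P_1$, and then absorb the perturbation coming from $w$ being anywhere inside its box into the constants, using $2\eps_1\le\eps_2$.
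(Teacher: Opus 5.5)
There is a genuine gap at the two ends of the $t$-range. It cannot be closed by any argument, because the statement as written is \emph{false} there.

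For small $t$ you propose to use ``all $\Theta(\eps_2/\eps_1^2)$ boxes of that third of the band''. But the target $\lfloor(\eps_2/\eps_1)^2|V(G)|/(4t)\rfloor$ is of order $\eps_2^3\eps_1^{-4}t^{-1}$, which is larger by a factor of order $(\eps_2/\eps_1)^2/t$, unbounded as $t$ decreases.

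For large $t$ the floor vanishes only when $t>(\eps_2/\eps_1)^2|V(G)|/4\ge|V(G)|$. Your bound $\deg(w)\ge c(\eps_2/\eps_1^2)\min\{1,\eps_2/\varphi\}$, however, never exceeds $c\,\eps_2\eps_1^{-2}$. The step ``$\deg(w)\ge t$ whenever $\varphi\le c\eps_2^2/(\eps_1^2t)$'' silently drops the $\min$ and is valid only for $t\le c\,\eps_2\eps_1^{-2}$.

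Concretely, both ends fail outright:
\begin{itemize}
\item At $t=1$ the right-hand side is at least $|V(G)|$ (since $\eps_2\ge 2\eps_1$), which exceeds $\deg(v)$.
\item At $t=|V(G)|$ the left-hand side is $0$, while the right-hand side is $\lfloor(\eps_2/\eps_1)^2/4\rfloor\ge 1$.
\end{itemize}

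The paper's proof carries the same unstated restriction. Its point $A_2$ with $|Q_1A_2|=t\eps_1^2/\eps_2$ cannot exist once $t>\eps_2\eps_1^{-2}$. Its point $A_1$ cannot exist when $t\le(\eps_2/\eps_1)^2/4$, since Ptolemy's inequality would force $|A_1Q_2|>\eps_2^2/(4t\eps_1^2)\ge 1$, which is impossible in a set of diameter $1$.

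What is true, and all that the sharpness of Lemma~\ref{lem:tails} needs, is the bound for $t$ between a constant multiple of $(\eps_2/\eps_1)^2$ and a small constant multiple of $\eps_2\eps_1^{-2}$. Below that range the bound in Lemma~\ref{lem:tails} already exceeds $N$. For $t$ beyond the maximum degree of $G$, which is of order $\eps_2\eps_1^{-2}$, the left-hand side is $0$. You should state and prove the claim in that window rather than assert the end cases.

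Inside that window your route is, up to relabelling, the paper's configuration. The paper takes $v$ meeting the segment $P_1Q_1$. The high-degree neighbours of $v$ lie in the sector $\mathcal{S}_2$ of the band opposite $P_1$ next to $Q_2$, and their witnesses lie in the sector $\mathcal{S}_1$ of the band opposite $P_2$ next to $Q_1$. Both arguments rest on the fact that points at angular distances $\varphi$ and $\psi$ from two corners are at distance about $1-\varphi\psi$.

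The difference is packaging. You bound $\deg(w)$ pointwise, which is where the second-order expansion you worry about enters. That expansion is in fact exact on the outer arcs. Take $w$ on the arc $P_1P_2$ at angle $\varphi$ from $P_1$ seen from $P_0$, and $y$ on the arc $P_0P_2$ at angle $\psi$ from $P_0$ seen from $P_1$. Then $|wy|^2=1-8\sin(\varphi/2)\sin(\psi/2)\cos((\varphi+\psi)/2)$, and the thickness of the two bands costs less than $0.6\eps_2$ in distance.

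The paper instead fixes $t$ and takes a single sector of length $t\eps_1^2/\eps_2$, so that it meets at least $t$ boxes. It then lower-bounds the length of a second sector, all of whose points are at distance at least $1-\eps_2$ from the first. The bound comes from Ptolemy's inequality in $Q_1A_2A_1Q_2$: $|A_1Q_2|\cdot|A_2Q_1|\ge(1-\delta)(\eps_2-\delta)>\eps_2/4$, with $\delta=\eps_2/\sqrt{3}$. The complete bipartite graph between the two sectors then replaces all pointwise degree estimates.

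Two smaller remarks.
\begin{itemize}
\item Your margin $1-0.91\eps_2-\eps_2/20-\eps_1/2\ge1-\eps_2$ fails for $\eps_1$ close to $\eps_2/2$. Measure from $Q_0$ instead. The opposite band lies outside the disc of radius $1-\eps_2/\sqrt{3}$ about $Q_0$, and $|P_0Q_0|=\eps_2/3<(1-1/\sqrt{3})\eps_2$. So every box meeting $P_0Q_0$ is adjacent to the whole opposite band, and the three corners supply the $2\eps_2/\eps_1$ vertices, as in the paper.
\item Do not invest in the exact factor $1/4$. The paper's own last step, $\eps_2^2/(4t\eps_1^2)\ge2\eps_2N/(9t)$, is off by a factor of about $4$ when $N=(4.5+o(1))\eps_2\eps_1^{-2}$. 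What is really established is the bound up to an absolute constant, which is all the sharpness claim requires.
\end{itemize}
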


\begin{proof}
    Take a covering of $RA(\eps_2/\sqrt{3})$ by boxes of size $\eps_1/2$ where no two box centers are closer to each other than $\eps_1/4$. Let $P_0P_1P_2$ and $Q_0Q_1Q_2$ denote the outer and the inner equilateral triangles of $RA(\eps_2/\sqrt{3})$. The arc $P_iP_{i+1}$ has length $\pi/3$ and the radial segment $P_iQ_i$ has length $\eps_2/3$. A simple calculation shows that we can associate a box graph on $N=\big(4.5+o(1)\big) \cdot \eps_{2\ }\eps_1^{-2}$ vertices to $RA(\eps_2/\sqrt{3})$, where, as in Section \ref{sec:reduction}, two boxes are connected if we can find a point in each of them such that the two form an $\eps_2$-antipodal pair.

    Due to the symmetry, it is enough to prove the bound for a vertex $v$ whose box $B_v$ intersects the segment $P_1Q_1$, since there are $2\eps_2/3\eps_1$ of them. Fix $t\geq 1$ and consider the points $A_2$ and $A_1$ on the arcs $Q_1Q_0$ and $Q_2Q_0$, respectively, such that $|Q_1A_2|=t\eps_1^{2\ }\eps_2^{-1}$ and $|A_1A_2|=1-\eps_2$.\linebreak Moreover, let $Q'_1,A_2'$ denote the intersection points of the arc $P_0P_1$ with the lines $P_2Q_1$ and $P_2A_2$, respectively. Similarly, let $Q'_2,A'_1$ denote the intersection points of the arcs $P_0P_2$ with the lines $P_1Q_2$ and $P_1A_1$, respectively (see Figure \ref{fig:reuc2b}).

    \begin{figure}
        \centering
        \includegraphics[width=0.3\linewidth]{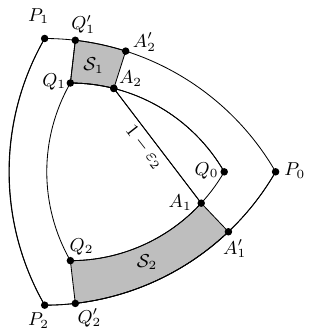}
        \caption{$\mathcal{S}_1$ and $\mathcal{S}_2$ span a complete bipartite graph in the box graph.}
        \label{fig:reuc2b}
    \end{figure}
    
    Next, we note that the shortest distance between a point inside the annulus sector $Q_1A_2A'_2Q'_1$ and a point inside the annulus sector $Q_2Q'_2A'_1A_1$ is given by the segment $A_1A_2$ of length $1-\eps_2$. 
    
    This means the subgraph of $G$ induced by the boxes covering these two regions is a complete bipartite graph. Let $\mathcal{S}_1$ and $\mathcal{S}_2$ denote its two vertex sets. Also note that the points in $\mathcal{S}_2$ are at distance at least $1-\eps_2$ from $v$. Our goal is to show that $|\mathcal{S}_1|\geq t$ and $|\mathcal{S}_2|\geq (\eps_2/2\eps_1)^2\cdot N/t$. This would finish the proof since each vertex $w\in \mathcal{S}_2$ has all vertices from $\mathcal{S}_1$ in their neighborhood, which implies that $\mathcal{S}_2\subset \{w\sim v: \text{deg}(w)\geq t\}$, giving  $|\{w\sim v: \text{deg}(w)\geq t\}|\geq (\eps_2/2\eps_1)^2\cdot N/t$.

   To bound $|\mathcal{S}_1|$, let $\alpha_1$ denote the measure of angle $\angle Q_1P_2A_2$ in radians and recall the area of the sector $Q_1A_2A'_2Q'_1$ is given by $\big(1-(1-\varepsilon_2/3)^2\big)\cdot \alpha_1/2>\alpha_1(1-\eps_2/3)\cdot \eps_2/3\geq 4t/3\cdot (\eps_1/2)^2$, where the last inequality follows as $\alpha_1(1-\eps_2/3)$ actually represents the length of the arc $Q_1A_2$, which is bigger than $|Q_1A_2|=t\eps_1^{2\ }\eps_2^{-1}$.
    Therefore, we must have at least $t$ vertices of $G$ that correspond to boxes inside the sector $Q_1A_2A'_2Q'_1$. This means that $|\mathcal{S}_1|\geq t$ and, if we show that $|A_1Q_2|\geq 2\eps_{2}N/9t$, the same argument will show that the sector $Q_2Q'_2A'_1A_1$ has area bigger than $(2/3)^3\cdot (\eps_2/\eps_1)^2\cdot N/t\cdot \eps_1^2/4$, which implies that $|\mathcal{S}_2|\geq (\eps_2/2\eps_1)^2\cdot N/t$. Thus, we are only left with bounding $|A_1Q_2|$.    

   By applying Ptolemy's inequality in the quadrilateral $Q_1A_2A_1Q_2$ with $|A_1A_2|=1-\eps_2$ and $|Q_1Q_2|=|Q_1A_1|=|Q_2A_2|=1-\eps_2/\sqrt{3}$, we get $|A_1Q_2|\cdot |A_2Q_1|\geq |A_1Q_1|\cdot|A_2Q_2|-|A_1A_2|\cdot|Q_1Q_2|$, leading to $|A_1Q_2|\cdot |A_2Q_1|\geq \left(1-\eps_2/\sqrt{3}\right)\left(\eps_2-\eps_2/\sqrt{3}\right)\geq \eps_2\left(1-1/\sqrt{12}\right)\left(1-1/\sqrt{3}\right) >\eps_2/4$. However, this can be rewritten as $|A_1Q_2|>\eps_1^{-2\ }\eps_2^2\cdot (4t)^{-1}\ge2\eps_{2}N/9t$, as desired.
\end{proof}

\section{Concluding remarks and open questions}\label{sec:conclude}

The box graph arises naturally when one seeks to analyze the relationship between antipodal and neighboring pairs. After reducing the problem to estimating the largest eigenvalue of its adjacency matrix, one of the main difficulties was the selection of an appropriate test vector for the Collatz–Wielandt bound in Proposition~\ref{vector-test-criterion}.
Using the all-ones vector, it is easy to get an upper bound of $N^{3/2}$. However, a box graph may contain vertices of very large degree, potentially linear in $N$, which renders the all-ones vector unsuitable. A more refined candidate is the degree vector $\mathbf{d}$, since $A\mathbf{d}$   counts the number of $2$-paths originating at each vertex, where $A$ denotes the adjacency matrix of $G$. Nonetheless, this choice only yields an  $\sqrt{N\log N}$ upper bound for the order of the largest eigenvalue, leading to the $\sqrt{\varepsilon^{-1}\log(\varepsilon^{-1})}$ threshold, which was also obtained by  Korsky \cite{korsky2026optimalspectralboundsantipodal}.
Moreover, constructions such as in Example \ref{exa:sharptail} demonstrate that this selection cannot give a better bound. However, in all such examples, the vertices which generate a large number of $2$-paths appear to be relatively sparse. 
This suggests averaging over longer paths as a way to mitigate this concentration phenomenon. Indeed, our calculations suggested that by looking at $A^2\mathbf{d}$ --  which corresponds to counting $3$-paths -- we might be able to get a sharp result. Finally, we arrived at the vector $(\sqrt{d_1},\sqrt{d_2},\ldots,\sqrt{d_N} )$ by noting that it works for all examples known to us, including Example \ref{exa:sharptail}. \medskip

The problem is also worth investigating in higher dimensions. As illustrated by the annulus construction (Example 1), the extremal behavior in $\mathbb{R}^2$ of the ratio between $\mathcal{N}_{\eps_1}(X)$ and $\mathcal{A}_{\eps_2}(X)$ in the regime $\varepsilon_1 < \varepsilon_2$ can be achieved by comparing the area of a ball of radius $\varepsilon_1/2$ with that of the subset of a ball of radius $1/2$ consisting of points at distance at least $1-\varepsilon_2$ from a fixed point located near the boundary. In contrast, when $\varepsilon_1 > \varepsilon_2$, the corresponding threshold appears to depend solely on $\varepsilon_1$. We conjecture that an analogous phenomenon persists in higher dimensions: for $d \geq 3$, one of the extremal configurations should be realized by a set of points distributed uniformly in a region $\mathcal{R}$ of $\mathbb{R}^d$ defined as the space between two concentric balls $B_1$ and $B_2$ of diameters $1$ and $1 - \varepsilon_2$, respectively. 

A ball of radius $\eps_1/2$ has volume of order $\Theta_d\big(\eps_1^d\big)$, and if we fix a point $Q$ in $\mathcal{R}$, it can be shown that the subset of $\mathcal{R}$ made of points which are $\eps_2$-antipodal with $Q$ has volume $\Theta_d\big(\eps_2^{(d+1)/2}\big)$. When $\eps_1<\eps_2$, plenty of balls of radius $\eps_1/2$ can be fitted inside $\mathcal{R}$, leading to the volume ratio threshold. When $\eps_1>\eps_2$ this is no longer the case, and the behavior is the same as for $\eps_1=\eps_2$, since extremal constructions can be pushed closer to the boundary of $B_1$. 

Motivated by these observations, we propose the following conjecture.

\begin{conjecture}
    There are universal constants $c_1,c_2>0$ such that for any $0<\varepsilon_1,\varepsilon_2<1$ and any finite set $X\subset \mathbb{R}^d$ of diameter at most $1$ with $|X|\ge c_1\varepsilon_1^{-d}$ the following holds: 
\[
\dfrac{|\mathcal{N}_{\eps_1}(X)|}{|\mathcal{A}_{\eps_2}(X)|} \ge
\begin{cases}
c_2\cdot \eps_1^d\cdot \eps_2^{-(d+1)/2}, & \eps_1 \le \eps_2; \\
 c_2\cdot \eps_1^{(d-1)/2}, & \eps_1 > \eps_2.
\end{cases}
\]
\end{conjecture}

The reduction to determining the largest eigenvalue of the box graph naturally carries over to higher dimensions. Thus, extending our approach to $\mathbb{R}^d$ amounts to finding a suitable analog of Lemma \ref{quadrilateral} and understanding the corresponding tail bounds it would imply on the degree distribution of the box graph.

Another natural direction is to study the problem in different metric spaces. Already in $\mathbb{R}^2$, the answer depends on the choice of norm. 
For instance, consider the $\ell_\infty$ norm, and place $n/2$ points evenly along the segment from $(0,0)$ to $(0,1)$, while the remaining $n/2$ points are evenly placed along the segment from $(1,0)$ to $(1,1)$. Then there are $n^2/4$ pairs at distance exactly $1$ and $\Theta(n^2 \varepsilon)$ $\varepsilon$-neighboring pairs. This shows the lower bound of order $\varepsilon^{1/2}$ is not attainable in this setting.

\section*{Acknowledgment}
The authors wish to thank J\'anos Pach and Zolt\'an L\'or\'ant Nagy for the fruitful discussions about the question. The use of Lemma \ref{lem:tails} was suggested by ChatGPT 5.4, but the proof of the lemma is due to the authors.  

G\'abor Dam\'asdi was supported by the ERC Advanced Grant no. 882971 GeoScape, by the J\'anos Bolyai Research Scholarship of the Hungarian Academy of Sciences, the STARTING-25 project no. 152199 and the EXCELLENCE-24 project no. 151504 Combinatorics and Geometry of the NRDI Fund.

Lauren\c{t}iu Ploscaru was supported by the ERC Advanced Grants no. 882971 GeoScape and ERMiD no. 101054936.

\printbibliography

\end{document}